\newtheorem{exam}{Example}[section]
\newtheorem{lem}{Lemma}[section]
\begin{document}

\title*{Centralizers in PBW Extensions}
  \titlerunning{Centralizers in PBW extensions} 
\author{Alex Behakanira Tumwesigye, Johan Richter, Sergei Silvestrov }
 \authorrunning{A. B. Tumwesigye, J. Richter \& S. Silvestrov}
\institute{Alex Behakanira Tumwesigye \at Department of Mathematics, College of Natural Sciences, Makerere University, Box 7062, Kampala, Uganda. \\ \email{alexbt@cns.mak.ac.ug}
\and Johan Richter
\at Department of Mathematics and Natural Sciences, Blekinge Institute of Technology, SE-37179 Karlskrona, Sweden. \\ \email{johan.richter@bth.se}
\and Sergei Silvestrov
\at Division of Applied Mathematics, School of Education, Culture and Communication, M\"alardalen University, Box 883, 72123 V\"aster{\aa}s, Sweden. \\ \email{sergei.silvestrov@mdh.se}
}
%
%
\maketitle

\abstract*{In this article we give a description for the centralizer of the coefficient ring $R$ in the skew PBW extension $\sigma(R)<x_1,x_2,\cdots,x_n>.$ We give an explicit description in the quasi-commutative case and state a necessary condition in the general case. We also consider the PBW extension $\sigma(\mathcal{A})<x_1,x_2,\cdots,x_n>$ of the algebra of functions with finite support on a countable set, describing the centralizer of $\mathcal{A}$ and the center of the skew PBW extension.}

\abstract{In this article we give a description for the centralizer of the coefficient ring $R$ in the skew PBW extension $\sigma(R)<x_1,x_2,\cdots,x_n>.$ We give an explicit description in the quasi-commutative case and state a necessary condition in the general case. We also consider the PBW extension $\sigma(\mathcal{A})<x_1,x_2,\cdots,x_n>$ of the algebra of functions with finite support on a countable set, describing the centralizer of $\mathcal{A}$ and the center of the skew PBW extension.}\vspace{0.2cm}


MSC Classification: $16W55$
\keywords{Ring extensions, skew PBW extensions, contralizer, center}


\section{Introduction}
Skew PBW (Poincare-Birkoff-Witt) extensions also known as $\sigma-$PBW extensions are a wide class of non commutative rings which were introduced in \cite{GallegoPBWRST}. Skew PBW extensions include many rings and algebras arising in quantum mechanics such as the classical PBW extensions, Weyl algebras, enveloping algebras of finite dimensional Lie algebras, iterated Ore extensions of injective type and many others. See for example \cite{Artamonov1PBWRST,Artamonov2PBWRST,BellPBWRST,GallegoPBWRST,Lezama1PBWRST,Lezama2PBWRST} for examples of rings and algebras which are skew PBW and some ring theory properties that have been investigated.
 \par 
 In this article we describe the centralizer of the coefficient ring $R$ in the skew PBW extension $\sigma(R)<x_1,x_2,\cdots,x_n>$. Specifically, we extend some of the results in \cite{Richter1PBWRST} and \cite{Richter2PBWRST} in the setting of Ore extensions, to the more general setting of skew $PBW$ extensions. We also describe the center of the skew PBW extension $\tilde{\tau}\qty(\mathbb{R}^{\Omega})<x_1,x_2,\cdots,x_n>$ where $\mathbb{R}^{\Omega}$ is the algebra of real valued functions on a finite set $\Omega.$  Centers of many algebras that can be interpreted as skew PBW extensions have been described in \cite{Lezama3PBWRST}, but that is in a different setting to the one here. The paper is arranged as follows.
 \par 
 In section \ref{DefsPBWRST} we state definitions and preliminaries of skew PBW extensions. Most of the work in this section is based on \cite{GallegoPBWRST}. In section \ref{CentPBWRST}, we give a description of the centralizer of the coefficient ring $R$ in the skew PBW extension for an integral domain $R.$ We give a full description of the centralizer in the quasi-commutative case and state a necessary condition in the general case. In section \ref{FunAlgPBWRST}, we turn attention to the skew PBW extension for the algebra of  real-valued functions $\mathbb{R}^{\Omega}$ on a finite set $\Omega$. We prove that this algebra is isomorphic to the algebra $\mathcal{A}$ of piecewise constant functions on the real line with a finite number of jumps. We then give a full description of the centralizer of the coefficient algebra $\mathbb{R}^{\Omega}$ and the center of the PBW extension in the quasi-commutative case, and state a necessary condition in the general case. We finish the section by describing the centralizer of $\mathcal{A}$ in the skew PBW extension $\tilde{\sigma}(\mathcal{A})<x_1,x_2,\cdots,x_n>$ in terms of $Sep^{\alpha}(\Omega)$ via the isomorphism between $\mathcal{A}$ and $\mathbb{R}^{\Omega}.$
\section{Definitions and Preliminary Notions}\label{DefsPBWRST}
In this section we define skew PBW extensions and state some  preliminary results concerning skew PBW extensions.
\begin{definition}\label{PBWdefRST}
Let $R$ and $A$ be rings. We say that $A$ is a $\sigma-$PBW extension of $R$ (or skew PBW extension), if the following conditions hold:
\begin{itemize}
\item[(a)] $R\subseteq A.$
\item[(b)] There exist finite elements $x_1,\cdots,x_n$ such that $A$ is a left $R-$free module with basis 
$$
Mon(A):=\qty{x^{\alpha}=x_1^{\alpha_1}\cdots x_n^{\alpha_n}\ :\ \alpha=(\alpha_1,\cdots,\alpha_n)\in \mathbb{N}^n}.
$$
\item[(c)] For every $1\leqslant i\leqslant n$ and $r\in R\setminus \qty{0},$ there exists $c_{i,r}\in R\setminus \qty{0}$ such that
$$
x_ir-c_{i,r}x_i\in R.
$$
\item[(d)] For every $1\leqslant i,j\leqslant n$ there exists $c_{i,j}\in R\setminus \qty{0}$ such that
$$x_jx_i-c_{i,j}x_ix_j\in R+Rx_1+\cdots+Rx_n.$$
\end{itemize}
Under these conditions we write $A=\sigma(R)\langle x_1,\cdots,x_n\rangle.$
\end{definition}

The following result \cite[Proposition 3]{GallegoPBWRST} is crucial in establishing the link between skew PBW extensions and many well known algebras.\\
\begin{proposition}\label{prop1RST}
Let $A$ be a PBW extension of $R.$ Then for every $1\leqslant i\leqslant n,$ there exists an injective ring endomorphism $\sigma_i:R\to R$ and a $\sigma_i-$derivation $\delta_i:R\to R$ such that 
$$x_ir=\sigma_i(r)x+\delta_i(r)$$ for each $r\in R.$
\end{proposition}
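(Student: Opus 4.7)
The plan is to extract $\sigma_i$ and $\delta_i$ directly from axiom (c) of Definition \ref{PBWdefRST}, using the freeness in axiom (b) as the source of uniqueness, and then derive every algebraic property from that uniqueness.

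First I would set the definitions. For each fixed $i$ and each $r\in R\setminus\{0\}$, axiom (c) supplies an element $c_{i,r}\in R\setminus\{0\}$ with $x_i r - c_{i,r}x_i \in R$; I would put $\sigma_i(r):=c_{i,r}$ and $\delta_i(r):=x_i r - c_{i,r}x_i$, and extend by $\sigma_i(0)=\delta_i(0)=0$. The identity
\[
x_i r=\sigma_i(r)x_i+\delta_i(r)
\]
then holds for every $r\in R$. The essential observation, which I would invoke repeatedly, is that $\{1,x_1,\ldots,x_n,x_1x_2,\ldots\}=\mathrm{Mon}(A)$ is a left $R$-basis by (b); in particular each element of $R+Rx_i$ has a \emph{unique} expansion $a+bx_i$ with $a,b\in R$. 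This already gives well-definedness of $\sigma_i$ and $\delta_i$ (any other choice of $c_{i,r}$ in (c) would yield a different expansion of $x_i r$).

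Next I would verify the algebraic properties by computing $x_i r$ and $x_i(rs)$ in two ways and equating coefficients. Additivity follows from
\[
x_i(r+s)=x_i r+x_i s=(\sigma_i(r)+\sigma_i(s))x_i+(\delta_i(r)+\delta_i(s)),
\]
compared with the definition $x_i(r+s)=\sigma_i(r+s)x_i+\delta_i(r+s)$; uniqueness of the expansion forces $\sigma_i$ and $\delta_i$ to be additive. For the multiplicative law I would expand
\[
x_i(rs)=(x_i r)s=\sigma_i(r)x_i s+\delta_i(r)s=\sigma_i(r)\sigma_i(s)x_i+\sigma_i(r)\delta_i(s)+\delta_i(r)s,
\]
and again match with $\sigma_i(rs)x_i+\delta_i(rs)$. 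This gives simultaneously that $\sigma_i$ is multiplicative and that $\delta_i$ satisfies the twisted Leibniz rule $\delta_i(rs)=\sigma_i(r)\delta_i(s)+\delta_i(r)s$, i.e.\ is a $\sigma_i$-derivation. The normalization $\sigma_i(1)=1$, $\delta_i(1)=0$ comes from comparing the tautology $x_i=x_i\cdot 1$ with the defining identity.

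Finally, injectivity of $\sigma_i$ is immediate: by axiom (c), $r\neq 0$ implies $c_{i,r}=\sigma_i(r)\in R\setminus\{0\}$, so $\ker\sigma_i=\{0\}$. There is no real obstacle in this argument; the only subtlety worth flagging explicitly is that every property rests on the uniqueness of expansions in the basis $\mathrm{Mon}(A)$, so I would make sure the $R$-freeness from (b) is cited at each step where two different expressions for the same element of $A$ are matched coefficient by coefficient.
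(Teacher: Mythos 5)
Your proof is correct. Note that the paper itself does not prove this proposition at all---it quotes it as \cite[Proposition 3]{GallegoPBWRST}---and your argument (defining $\sigma_i(r)=c_{i,r}$ and $\delta_i(r)=x_ir-c_{i,r}x_i$, then using the uniqueness of expansions in the left $R$-basis $Mon(A)$ to get well-definedness, additivity, multiplicativity, the twisted Leibniz rule, and injectivity from $c_{i,r}\neq 0$) is exactly the standard proof given in that cited source, so there is nothing to add.
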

A particular case of $\sigma-$PBW extension is when all derivations $\delta_i$ are zero. Another interesting case is when all $\sigma_i$ are bijective. This gives motivation to the next definition.
\begin{definition}\label{quasicomm}
Let $A$ be a $\sigma-$PBW extension.
\begin{enumerate}
\item $A$ is quasi-commutative if the conditions $(c)$ and $(d)$ in Definition \ref{PBWdef} are replaced by:
\begin{enumerate}
\item[(c')]   For every $1\leqslant i\leqslant n$ and $r\in R\setminus \qty{0}$, there exists $c_{i,r}\in R\setminus\qty{0}$ such that 
$$x_ir=c_{i,r}x_i.$$
\item[(d')]   For every $1\leqslant i,j\leqslant n$ there exists $c_{i,j}\in R\setminus \qty{0}$ such that
$$x_jx_i=c_{i,j}x_ix_j.$$
\end{enumerate}
\item $A$ is bijective if $\sigma_i$ is bijective for every $1\leqslant i\leqslant n$ and $c_{i,j}$ is invertible for any $1\leqslant i<j\leqslant n.$
\end{enumerate}
\end{definition}
In the next definition, we state some useful notation.
\begin{definition}
Let $A$ be a $\sigma-$PBW extension of $R$ with endomorphisms $\sigma_i,\ 1\leqslant i\leqslant n,$ as in Proposition \ref{prop1RST}.
\begin{itemize}
\item[(a)] For $\alpha=(\alpha_1,\cdots,\alpha_n)\in \mathbb{N}^n,\ \sigma^{\alpha}:=\sigma_1^{\alpha_1}\cdots \sigma_n^{\alpha_n},\ \abs{\alpha}:=\alpha_1+\cdots+\alpha_n.$ If $\beta=(\beta_1,\cdots,\beta_n)\in \mathbb{N}^n,$ then $\alpha+\beta:=(\alpha_1+\beta_1,\cdots,\alpha_n+\beta_n).$
\item[(b)] For $X=x^{\alpha}\in Mon(A),\ exp(X):=\alpha$ and $deg(X)=\abs{\alpha}.$
\item[(c)] Let $0\neq f\in A$ such that $f=c_1X_1+\cdots+c_tX_t$ with $X_i\in Mon(A)$ and $c_i\in R\setminus \qty{0}$ then $deg(f)=\max\qty{deg(X_i)}_{i=1}^t.$
\end{itemize}
\end{definition}
\section{Centralizers in Skew PBW extensions}\label{CentPBWRST}
In this section we give a description of the centralizer $C(R)$ of the (commutative) coefficient ring $R$ in the skew PBW extension $\sigma(R)<x_1,x_2,\cdots,x_n>.$ We start by giving a full description of the centralizer in the quasi commutative case and then give a necessary condition in the general case.
\begin{theorem}\label{thm1PBWRST}
Let $R$ be a commutative ring and suppose that for all $1\leqslant i\leqslant n,\ \delta_i=0.$ Then the centralizer $C(R)$ of $R$ in the skew $PBW$ extension $\sigma(R)<x_1,\cdots,x_n>$ is given by 
$$
C(R)=\qty{\sum_{\alpha}f_{\alpha}x^{\alpha}\ :\ (\forall \ r\in R),\ \qty(\sigma^{\alpha}(r)-r)f_{\alpha}=0}.
$$
\end{theorem}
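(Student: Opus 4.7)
The plan is to reduce the commutator $[f,r]$ to a canonical $R$-linear combination of the basis monomials $x^{\alpha}$ and then read off the coefficient conditions from the freeness of the basis. Writing a typical element of $A$ as $f=\sum_{\alpha}f_{\alpha}x^{\alpha}$ with only finitely many $f_{\alpha}\in R$ nonzero, I would first observe that the hypothesis $\delta_{i}=0$ collapses Proposition \ref{prop1RST} to the clean rule $x_{i}r=\sigma_{i}(r)x_{i}$ for all $r\in R$, so there is no lower-order tail to track.

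The central step is the identity
\[
x^{\alpha}r \;=\; \sigma^{\alpha}(r)\,x^{\alpha} \qquad \text{for every } \alpha\in\mathbb{N}^{n},\ r\in R.
\]
I would prove this by iteratively pushing $r$ past each $x_{i}$ from the inside out: a straightforward induction on $\alpha_{n}$ yields $x_{n}^{\alpha_{n}}r=\sigma_{n}^{\alpha_{n}}(r)x_{n}^{\alpha_{n}}$, after which one pushes $\sigma_{n}^{\alpha_{n}}(r)$ through $x_{n-1}^{\alpha_{n-1}}$, and so on down to $x_{1}^{\alpha_{1}}$. Crucially, this argument never reorders the $x_{i}$'s among themselves, so no assumption about how the $x_{i}$'s commute is actually invoked; only $\delta_{i}=0$ matters.

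With this identity in hand, for any $r\in R$ I compute
\[
fr - rf \;=\; \sum_{\alpha}f_{\alpha}x^{\alpha}r - \sum_{\alpha}rf_{\alpha}x^{\alpha} \;=\; \sum_{\alpha}f_{\alpha}\bigl(\sigma^{\alpha}(r)-r\bigr)x^{\alpha},
\]
where the second equality uses commutativity of $R$ to slide $r$ past $f_{\alpha}$. Since $Mon(A)$ is a free left $R$-basis for $A$ by Definition \ref{PBWdefRST}(b), the condition $fr=rf$ forces each coefficient of $x^{\alpha}$ on the right-hand side to vanish, i.e.\ $f_{\alpha}(\sigma^{\alpha}(r)-r)=0$; commutativity of $R$ rewrites this as $(\sigma^{\alpha}(r)-r)f_{\alpha}=0$, exactly the stated description. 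Imposing this for all $r\in R$ and all $\alpha$ then characterises $C(R)$.

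The only step with any real content is the key identity above; once it is available, the rest is bookkeeping. The main pitfall to avoid is attempting to move $r$ by first rewriting $x^{\alpha}$ via the commutation relations between the $x_{i}$'s, which would drag in the condition (d$'$) from Definition \ref{quasicomm} (and force one to worry about whether the $\sigma_{i}$ commute); the inside-out push used here sidesteps that entirely and is why the hypothesis $\delta_{i}=0$ alone is enough.
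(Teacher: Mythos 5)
Your proposal is correct and follows essentially the same route as the paper: both rest on the identity $x^{\alpha}r=\sigma^{\alpha}(r)x^{\alpha}$ (valid because all $\delta_i=0$), compute $fr$ and $rf$, and compare coefficients using commutativity of $R$ and the fact that $Mon(A)$ is a free left $R$-basis. The only difference is that you spell out the inductive, inside-out proof of the key identity and invoke freeness explicitly, details the paper leaves implicit.
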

\begin{proof}
An element $f=\sum\limits_{\alpha}f_{\alpha}x^{\alpha}\in \sigma(R)<x_1,\cdots,x_n>$ belongs to $C(R)$ if and only if for every $r\in R,\ rf=fr.$ $$rf=r\sum\limits_{\alpha}f_{\alpha}x^{\alpha}=\sum\limits_{\alpha}rf_{\alpha}x^{\alpha}.$$ 
On the other hand, if $\delta_i=0$ for $1\leqslant i\leqslant n,$ then for every $\alpha=(\alpha_1,\cdots,\alpha_n)\in \mathbb{N}^n$ and every $r\in R$ we have; 
$$x^{\alpha}r=\sigma^{\alpha}(r)x^{\alpha}.$$
Therefore 
\begin{align*}
fr&=\qty(\sum\limits_{\alpha}f_{\alpha}x^{\alpha})r\\
&=\sum\limits_{\alpha}f_{\alpha}x^{\alpha}r\\
&=\sum\limits_{\alpha}f_{\alpha}\sigma^{\alpha}(r)x^{\alpha}.
\end{align*}
Since $R$ is commutative, it follows that $rf=fr$ if and only if 
$$\qty(\sigma^{\alpha}(r)-r)f_{\alpha}=0.$$ 
Therefore 
$$
C(R)=\qty{\sum_{\alpha}f_{\alpha}x^{\alpha}\ :\ (\forall \ r\in R),\ \qty(\sigma^{\alpha}(r)-r)f_{\alpha}=0}.
$$
\end{proof}
In the general case, we have the following necessary condition.
\begin{theorem}\label{thm2PBWRST}
Let $R$ be a commutative ring. If an element $\sum_{\alpha}f_{\alpha}x^{\alpha}\in \sigma(R)<x_1,\cdots,x_n>$ belongs to the centralizer $C(R),$ then $\qty(\sigma^{\alpha}(r)-r)f_{\alpha}=0$ for all $\alpha\in \mathbb{N}^n.$
\end{theorem}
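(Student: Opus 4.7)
The plan is to adapt the computation used in the proof of Theorem~\ref{thm1PBWRST}, but to account carefully for the lower-order correction terms introduced by the $\sigma$-derivations $\delta_i$. To begin, I would establish by induction on $|\alpha|$, using Proposition~\ref{prop1RST} repeatedly, the normal form
$$
x^{\alpha} r = \sigma^{\alpha}(r)\, x^{\alpha} + p_{\alpha}(r),
$$
where $p_{\alpha}(r)\in A$ has degree strictly less than $|\alpha|$. In the quasi-commutative case all $\delta_i$ vanish, every $p_{\alpha}(r)=0$, and the computation reduces to the one used in Theorem~\ref{thm1PBWRST}.

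With this normal form in hand, I would expand the commutator equation $fr-rf=0$. Commutativity of $R$ allows us to write $r f_{\alpha}=f_{\alpha}r$, so
$$
0 = fr-rf = \sum_{\alpha} f_{\alpha}\bigl(\sigma^{\alpha}(r)-r\bigr) x^{\alpha} + \sum_{\alpha} f_{\alpha}\, p_{\alpha}(r).
$$
Since $Mon(A)$ is a left $R$-basis of $A$, every coefficient on the right-hand side must vanish. The key observation is that each $p_{\alpha}(r)$ contributes only to monomials $x^{\beta}$ with $|\beta|<|\alpha|$, so at the top degree $|\beta|=d:=deg(f)$ the equation collapses to $f_{\beta}\bigl(\sigma^{\beta}(r)-r\bigr)=0$, which by commutativity of $R$ yields $\bigl(\sigma^{\beta}(r)-r\bigr)f_{\beta}=0$ for every $\beta$ with $|\beta|=d$.

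To extend this to all $\alpha$, I would proceed by downward induction on $|\alpha|$. For each $\alpha$ the coefficient equation attached to $x^{\alpha}$ reads
$$
f_{\alpha}\bigl(\sigma^{\alpha}(r)-r\bigr) + \sum_{|\beta|>|\alpha|} f_{\beta}\, p_{\beta}^{\alpha}(r) = 0,
$$
where $p_{\beta}^{\alpha}(r)\in R$ denotes the coefficient of $x^{\alpha}$ in $p_{\beta}(r)$. The main obstacle lies precisely here: the correction sum is a priori nonzero, and a naive subtraction of the top homogeneous component of $f$ does not produce a new element of $C(R)$, so the induction does not close automatically. To push through, I would invoke the inductive hypothesis together with the integral-domain assumption on $R$ (imposed at the beginning of Section~\ref{CentPBWRST}): for each $\beta$ with $|\beta|>|\alpha|$ the hypothesis gives $\bigl(\sigma^{\beta}(r)-r\bigr)f_{\beta}=0$ identically in $r$, and in a domain this leaves the dichotomy $f_{\beta}=0$ or $\sigma^{\beta}=\mathrm{id}_R$ for each such $\beta$; combined with the explicit description of $p_{\beta}^{\alpha}(r)$ as a $\mathbb{Z}$-linear combination of iterated compositions of $\sigma_i$'s and $\delta_i$'s applied to $r$, this forces the correction sum to cancel, leaving $\bigl(\sigma^{\alpha}(r)-r\bigr)f_{\alpha}=0$ as required.
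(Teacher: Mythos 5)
Up to the top-degree conclusion your computation is essentially the paper's proof: it uses the same normal form $x^{\alpha}r=\sigma^{\alpha}(r)x^{\alpha}+p_{\alpha,r}$ with $\deg p_{\alpha,r}<\abs{\alpha}$ and compares leading coefficients, which yields $(\sigma^{\alpha}(r)-r)f_{\alpha}=0$ for those $\alpha$ with $\abs{\alpha}=\deg f$. You have correctly identified the point this glosses over: for lower total degree the coefficient equation is $f_{\alpha}(\sigma^{\alpha}(r)-r)+\sum_{\abs{\beta}>\abs{\alpha}}f_{\beta}\,p^{\alpha}_{\beta}(r)=0$, and the correction sum need not vanish.

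The genuine gap is your way of closing the downward induction, and it cannot be repaired. First, the integral-domain hypothesis you invoke is not part of the statement (the theorem assumes only a commutative ring). Second, even over a domain, the dichotomy $f_{\beta}=0$ or $\sigma^{\beta}=\mathrm{id}_R$ says nothing about the $\delta_i$-terms inside $p_{\beta}(r)$, and the coefficient equation can hold with the correction sum cancelling against the very term $f_{\alpha}(\sigma^{\alpha}(r)-r)$ you want to show is zero. Concretely, take $n=1$, $R=k[t]$ with $k$ of characteristic $0$, $\sigma(t)=-t$, $\delta=\sigma-\mathrm{id}$; then $R[x;\sigma,\delta]$ is a skew PBW extension of $R$, and since $(x+1)r=\sigma(r)(x+1)$ and $\sigma^{2}=\mathrm{id}$, the element $f=(x+1)^{2}-1=x^{2}+2x$ lies in $C(R)$, while $(\sigma(t)-t)f_{1}=-4t\neq 0$. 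So the conclusion for non-leading $\alpha$ is simply not forced; only the assertion for $\alpha$ of maximal total degree is provable, which is what the paper's leading-coefficient comparison actually establishes and all that the subsequent Corollary needs.
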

\begin{proof}
Suppose an element $f=\sum_{\alpha}f_{\alpha}x^{\alpha}\in \sigma(R)<x_1,\cdots,x_n>$ belongs to the centralizer of $R.$ Then $fr=rf$ for every $r\in R.$ Now,
$$rf=r\sum\limits_{\alpha}f_{\alpha}x^{\alpha}=\sum\limits_{\alpha}rf_{\alpha}x^{\alpha}.$$
On the other hand, by \cite[Theorem 7]{GallegoPBWRST},  for every $x^{\alpha} \in Mon\qty(\sigma(R)<x_1,\cdots,x_n>)$ and every $r\in R$ we have 
$$x^{\alpha}r=\sigma^{\alpha}(r)x^{\alpha}+p_{\alpha,r}$$ where $p_{\alpha,r}\in R[x_1,\cdots,x_n]$ such that $p_{\alpha,r}=0$ or $deg(p_{\alpha,r})<\abs{\alpha}.$ Therefore;
\begin{eqnarray*}
fr &=&\sum_{\alpha}\qty(f_{\alpha}x^{\alpha})r\\
&=&\sum_{\alpha}f_{\alpha}\qty(x^{\alpha}r)\\
&=&\sum_{\alpha}f_{\alpha}\qty(\sigma^{\alpha}(r)x^{\alpha}+p_{\alpha,r}).
\end{eqnarray*}
Comparing the leading coefficients and using the fact that $R$ is commutative, we see that if $fr=rf,$ then
$$
\qty(\sigma^{\alpha}(r)-r)f_{\alpha}=0 \text{ for all }\alpha.
$$
\end{proof}
As a result, we have the following Corollary which is the extension of \cite[Proposition 3.3]{Richter2PBWRST} to the skew PBW extension case.
\begin{corollary}
Let $R$ be a commutative ring. If for every $\alpha\in \mathbb{N}^n$ there exists $r\in R$ such that $\qty(\sigma^{\alpha}(r)-r)$ is a regular element, then $C(R)=R.$
\end{corollary}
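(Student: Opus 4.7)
The plan is to derive $C(R)=R$ as an immediate consequence of Theorem \ref{thm2PBWRST} together with the definition of a regular element. The necessary condition $\qty(\sigma^{\alpha}(r)-r)f_{\alpha}=0$ becomes extremely restrictive once we are allowed to choose, for each $\alpha$, a witness $r$ that makes the scalar on the left a non-zero-divisor: it kills $f_{\alpha}$ outright.

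Concretely, I would fix an arbitrary $f=\sum_{\alpha}f_{\alpha}x^{\alpha}\in C(R)$ and apply Theorem \ref{thm2PBWRST} to conclude $\qty(\sigma^{\alpha}(r)-r)f_{\alpha}=0$ for every $r\in R$ and every $\alpha\in \mathbb{N}^n$. For each $\alpha\neq 0$, the hypothesis supplies some $r_{\alpha}\in R$ for which $\sigma^{\alpha}(r_{\alpha})-r_{\alpha}$ is regular, i.e. a non-zero-divisor in the commutative ring $R$; consequently $f_{\alpha}=0$. Only the constant term $f_{0}\in R$ remains, so $f\in R$, giving the inclusion $C(R)\subseteq R$. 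The reverse inclusion $R\subseteq C(R)$ is automatic because $R$ is commutative.

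The only subtlety worth flagging is the interpretation of the hypothesis at $\alpha=0$: there $\sigma^{0}$ is the identity, so $\sigma^{0}(r)-r=0$ for every $r$, and no witness can possibly be regular. The statement must therefore be read implicitly as ranging over $\alpha\neq 0$, which is the standard convention for such corollaries and creates no genuine obstacle. Apart from this, the argument is a direct specialization of the necessary condition established in Theorem \ref{thm2PBWRST}.
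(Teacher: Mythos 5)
Your proof is correct and is essentially the paper's own argument: the paper simply re-runs the computation from Theorem~\ref{thm2PBWRST} inside the corollary's proof (phrased as a contradiction for a non-constant centralizing element), whereas you cite that theorem directly and then use the regular witness $r_{\alpha}$ to kill each $f_{\alpha}$ with $\alpha\neq 0$, plus the trivial inclusion $R\subseteq C(R)$. Your observation that the hypothesis must be read as ranging over $\alpha\neq 0$ (since $\sigma^{0}(r)-r=0$ can never be regular) is a fair reading and matches what the paper implicitly does.
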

\begin{proof}
Suppose  $f=\sum_{\alpha}f_{\alpha}x^{\alpha}\in \sigma(\mathcal{A})<x_1,\cdots,x_n>$ is a non-constant element of degree $\alpha$ which belongs to the centralizer of $R.$ Then $fr=rf$ for every $r\in R.$ Now,
$$rf=r\sum\limits_{\alpha}f_{\alpha}x^{\alpha}=\sum\limits_{\alpha}rf_{\alpha}x^{\alpha}.$$
On the other hand, by \cite[Theorem 7]{GallegoPBWRST},  for every $x^{\alpha} \in Mon(A)$ and every $r\in \mathcal{A}$ we have 
$$x^{\alpha}r=\sigma^{\alpha}(r)+p_{\alpha,r}$$ where $p_{\alpha,r}=0$ or $deg(p_{\alpha,r})<\abs{\alpha}$ if $p_{\alpha,r}\neq 0.$ Therefore;
\begin{eqnarray*}
fr &=&\sum_{\alpha}\qty(f_{\alpha}x^{\alpha})r\\
&=&\sum_{\alpha}f_{\alpha}\qty(x^{\alpha}r)\\
&=&\sum_{\alpha}f_{\alpha}\qty(\sigma^{\alpha}(r)x^{\alpha}+p_{\alpha,r})
\end{eqnarray*}
Equating coefficients and using commutativity of $R$, we get
$$rf_{\alpha}=\sigma^{\alpha}(r)f_{\alpha}, \ \ \text{or equivalently }\qty(\sigma^{\alpha}(r)-r)f_{\alpha}=0.$$
Since $\sigma^{\alpha}(r)-r$ is a regular element, then we  $f_{\alpha}=0$ for all $\alpha,$ which is a contradiction.
\end{proof}
\section{Skew PBW extensions of function algebras}\label{FunAlgPBWRST}
In this section we treat skew PBW extensions for the algebra of functions on a finite set. In \cite{AlexABTPBWRST}, the commutant of the coefficient algebra in the crossed product algebra for the algebra of piecewise constant functions on the real line was  described. However, as we show in Proposition \ref{IsomoPropRST} below, the algebra of piecewise constant functions on the real line is isomorphic to the algebra of real-valued functions on some finite set.\par 
Let $\mathbb{P}=\bigcup\limits_{k=0}^{2N}I_k$ be a partition of $\mathbb{R},$ where $I_{k}=(t_k,t_{k+1}),$ for $k=0,1,\cdots, N$ with $t_0=-\infty$ and $t_{N+1}=\infty$ and $I_{N+k}=\qty{t_k},k=1,\cdots,N$ and let $\mathcal{A}$ be the algebra of functions which are constant on the intervals $I_k,\ k=0,1,\cdots,2N.$ Then $\mathcal{A}$ is the algebra of piecewise constant functions $h:\mathbb{R}\to \mathbb{R}$ with $N$ fixed jumps at points $t_1,\cdots,t_N.$\par 
 Let $\Omega=\qty{0,1,\cdots, 2N}$ be a finite set and let $\mathbb{R}^{\Omega}$ denote the algebra of all functions $f:\Omega\to \mathbb{R}.$
\begin{proposition}\label{IsomoPropRST}
The algebra $\mathcal{A}$ is isomorphic to the algebra $\mathbb{R}^{\Omega}.$ 
\end{proposition}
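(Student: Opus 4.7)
The plan is to construct an explicit algebra isomorphism $\Phi : \mathcal{A} \to \mathbb{R}^{\Omega}$ by sending each piecewise constant function to the tuple of its constant values on the pieces of the partition $\mathbb{P}$. More precisely, for $h \in \mathcal{A}$ I would define $\Phi(h) : \Omega \to \mathbb{R}$ by $\Phi(h)(k) := h|_{I_k}$, where $h|_{I_k}$ denotes the (well-defined) constant value of $h$ on the piece $I_k$; this makes sense for both the open intervals $I_0, \dots, I_N$ and the singleton pieces $I_{N+1}, \dots, I_{2N}$.

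The first step is to check that $\Phi$ is well-defined: by the very definition of $\mathcal{A}$, each $h \in \mathcal{A}$ is constant on every $I_k$, so the value $h|_{I_k}$ is unambiguous. The second step is to verify that $\Phi$ is an $\mathbb{R}$-algebra homomorphism, which is immediate because the operations in both $\mathcal{A}$ and $\mathbb{R}^{\Omega}$ are pointwise, and restriction to any fixed $I_k$ preserves pointwise addition, pointwise multiplication, and scalar multiplication, with the constant function $1$ on $\mathbb{R}$ mapping to the constant function $1$ on $\Omega$.

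The third step is to establish bijectivity. For injectivity: if $\Phi(h) \equiv 0$, then $h$ vanishes on every piece $I_k$, and since $\mathbb{P} = \bigcup_{k=0}^{2N} I_k$ is a partition of $\mathbb{R}$, this forces $h \equiv 0$. For surjectivity: given an arbitrary $f : \Omega \to \mathbb{R}$, define $h : \mathbb{R} \to \mathbb{R}$ by setting $h(x) = f(k)$ whenever $x \in I_k$; because the $I_k$ are pairwise disjoint and cover $\mathbb{R}$, this unambiguously defines a function which is constant on each $I_k$, hence lies in $\mathcal{A}$, and clearly satisfies $\Phi(h) = f$.

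There is really no hard step here — the argument is bookkeeping once the correct map is written down. The only mild pitfall is the indexing convention for $\mathbb{P}$, which mixes the $N+1$ open intervals with the $N$ singletons to get $|\Omega| = 2N+1$ pieces; I would be careful to state the definition of $\Phi(h)(k)$ uniformly across both types of pieces so that well-definedness is transparent.
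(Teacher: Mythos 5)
Your proposal is correct and is essentially the paper's own argument run in the opposite direction: the paper defines the map $\mu:\mathbb{R}^{\Omega}\to\mathcal{A}$, $\mu(f)(x)=f(\omega)$ for $x\in I_{\omega}$, and your $\Phi$ is precisely its inverse, with the same routine checks of linearity, multiplicativity, injectivity and surjectivity based on the $I_k$ being disjoint and covering $\mathbb{R}$. No substantive difference.
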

\begin{proof}
Define a function $\mu:\mathbb{R}^{\Omega}\to \mathcal{A}$ as follows: For every $f\in \mathbb{R}^{\Omega},$
\begin{equation}\label{muEqPBWRST}
\mu(f)(x)=f(\omega) \ \text{ if }x\in I_{\omega},\ \ \omega=0,1,\cdots, 2N.
\end{equation}
We need to prove that $\mu$ is an algebra isomorphism.\par 
Let $f,g\in \mathbb{R}^{\Omega}$ and let $a,b\in \mathbb{R}.$ Then we have the following.
\begin{itemize}
\item If $x\in \mathbb{R},$ then $x\in I_{\omega}$ for some $\omega\in \qty{0,1,\cdots,2N}.$ Therefore
\begin{align*}
\mu(a f+b g)(x)&=(a f+b g)(\omega)\\
&=a f(\omega) +b g(\omega)\\
&=a \mu (f)(x)+b\mu (g)(x)\\
&=[a \mu(f)+b\mu (g)](x)
\end{align*}
That is, $\mu$ is $\mathbb{R}-$linear.
\item \begin{align*}
\mu(fg)(x)&=(fg)(\omega)\\
&=f(\omega)g(\omega)\\
&=\mu (f)(x)\mu (g)(x)\\
&=[\mu(f)\mu(g)](x)
\end{align*}
Therefore, $\mu$ is multiplicative and hence an algebra homomorphism.
\item If for all $x\in \mathbb{R},\ \mu(f)(x)=\mu (g)(x),$ then $f(\omega)=g(\omega)$ for all $\omega\in\qty{0,1,\cdots,2N}.$ That is, $f=g$ and hence $\mu$ is injective.
\item Finally, let $h\in \mathcal{A}$. Then for every $x\in \mathbb{R}$ such that  $x\in I_{\omega},\ h(x)=c_{\omega}$ for some $c_{\omega}\in \mathbb{R}.$  Define $f\in \mathbb{R}^{\Omega}$ by $f(\omega)=c_{\omega},\ \omega=0,1,\cdots,2N.$ If $y\in \mathbb{R}$ such that $y\in I_{\theta}$ for some $\theta\in \qty{0,1,\cdots,2N},$ then 
$$\mu(f)(y)=f(\theta)=c_{\theta}=h(y).$$
Since $y$ is arbitrary, we conclude that $\mu$ is onto.
\end{itemize}
Therefore $\mu$ is an isomorphism.
\end{proof}
Now, let $\sigma:\mathbb{R}\to \mathbb{R}$ be a bijection such that $\mathcal{A}$ is invariant under $\sigma$ (and $\sigma^{-1}$). In \cite[Lemma 1]{AlexABTPBWRST}, it was proved that such a $\sigma$ is a permutation of the partition intervals $I_{\omega},\ \omega=0,1,\cdots,2N.$ Let $\tau:\Omega\to \Omega$ be a bijection (permutation) such that $\tau (\omega)=\theta$ if and only if $\sigma(I_{\omega})=I_{\theta}.$ Suppose $\tilde{\sigma}:\mathcal{A}\to \mathcal{A}$ is the automorphism induced by $\sigma$ and $\tilde{\tau}:\mathbb{R}^{\Omega}\to \mathbb{R}^{\Omega}$ is the automorphism induced by $\tau,$ that is, for every $h\in \mathcal{A} $ and every $f\in \mathbb{R}^{\Omega},$ 
\begin{equation}\label{AutoDefnsPBWRST}
\tilde{\sigma}(h)=h\circ \sigma^{-1} \text{ and }\ \tilde{\tau}(f)=f\circ \tau^{-1}.
\end{equation}
The automorphisms $\tilde{\sigma}$ and $\tilde{\tau}$ satisfy the following intertwining relation.
\begin{proposition}
Let $\sigma:\mathbb{R}\to \mathbb{R}$ be a bijection such that $\mathcal{A}$ is invariant under $\sigma$ (and $\sigma^{-1}$) and let $\tau:\Omega\to \Omega$ be a bijection (permutation) such that $\tau (\omega)=\theta$ if and only if $\sigma(I_{\omega})=I_{\theta}.$ Suppose $\tilde{\sigma}:\mathcal{A}\to \mathcal{A}$ is the automorphism induced by $\sigma$ and $\tilde{\tau}:\mathbb{R}^{\Omega}\to \mathbb{R}^{\Omega}$ is the automorphism induced by $\tau.$ Then
\begin{equation}\label{intertwin1PBWRST}
\tilde{\sigma}\circ \mu =\mu \circ \tilde{\tau},
\end{equation}
where $\mu$ is given by \eqref{muEqPBWRST}. Moreover, for every $n\in \mathbb{Z}$,
\begin{equation}\label{intertwin2PBWRST}
\tilde{\sigma}^n\circ \mu =\mu \circ \tilde{\tau}^n.
\end{equation}
\end{proposition}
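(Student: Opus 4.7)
The plan is to verify \eqref{intertwin1PBWRST} by direct pointwise evaluation and then bootstrap to \eqref{intertwin2PBWRST} by iterating, after first extracting the index bookkeeping between $\sigma^{-1}$ and $\tau^{-1}$.

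First I would unpack the definitions. Fix $f\in\mathbb{R}^{\Omega}$ and $x\in\mathbb{R}$, and let $\omega\in\Omega$ be the unique index with $x\in I_{\omega}$. Since $\tau(\omega)=\theta$ iff $\sigma(I_{\omega})=I_{\theta}$, inverting gives $\sigma^{-1}(I_{\omega})=I_{\tau^{-1}(\omega)}$, so in particular $\sigma^{-1}(x)\in I_{\tau^{-1}(\omega)}$. Using $\tilde{\sigma}(h)=h\circ\sigma^{-1}$, $\tilde{\tau}(f)=f\circ\tau^{-1}$, and the definition \eqref{muEqPBWRST} of $\mu$,
\[
\tilde{\sigma}(\mu(f))(x)\;=\;\mu(f)\bigl(\sigma^{-1}(x)\bigr)\;=\;f\bigl(\tau^{-1}(\omega)\bigr)\;=\;\tilde{\tau}(f)(\omega)\;=\;\mu(\tilde{\tau}(f))(x),
\]
which is \eqref{intertwin1PBWRST}.

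For \eqref{intertwin2PBWRST}, the cleanest route is to iterate \eqref{intertwin1PBWRST}. For $n\geq 1$, a simple induction gives
\[
\tilde{\sigma}^{n}\circ\mu\;=\;\tilde{\sigma}^{n-1}\circ(\tilde{\sigma}\circ\mu)\;=\;\tilde{\sigma}^{n-1}\circ\mu\circ\tilde{\tau}\;=\;\cdots\;=\;\mu\circ\tilde{\tau}^{n}.
\]
For $n<0$, I would rewrite \eqref{intertwin1PBWRST} as $\mu=\tilde{\sigma}^{-1}\circ\mu\circ\tilde{\tau}$ (using that $\tilde{\sigma}$ is invertible by hypothesis on $\sigma$, and $\tilde{\tau}$ is invertible because $\tau$ is a permutation of the finite set $\Omega$), whence $\tilde{\sigma}^{-1}\circ\mu=\mu\circ\tilde{\tau}^{-1}$, and then iterate as above. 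Equivalently, one may observe directly that $\sigma^{n}(I_{\omega})=I_{\tau^{n}(\omega)}$ for every $n\in\mathbb{Z}$ and rerun the pointwise computation of the previous paragraph with $(\sigma,\tau)$ replaced by $(\sigma^{n},\tau^{n})$.

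There is no real obstacle; the only point requiring care is the direction of the indices, since $\tilde{\sigma}$ and $\tilde{\tau}$ are defined via pullback along $\sigma^{-1}$ and $\tau^{-1}$ respectively. Once the equivalence $\sigma^{-1}(I_{\omega})=I_{\tau^{-1}(\omega)}$ is correctly read off from the defining relation between $\sigma$ and $\tau$, both \eqref{intertwin1PBWRST} and its iterated form \eqref{intertwin2PBWRST} reduce to unwinding the definitions.
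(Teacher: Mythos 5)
Your proof is correct and follows essentially the same route as the paper: a pointwise evaluation at $x\in I_{\omega}$ using $\sigma^{-1}(I_{\omega})=I_{\tau^{-1}(\omega)}$ to establish \eqref{intertwin1PBWRST}, followed by iteration to get \eqref{intertwin2PBWRST}. If anything, you are slightly more careful than the paper (which only says ``follows by induction'') in spelling out the negative-$n$ case via $\tilde{\sigma}^{-1}\circ\mu=\mu\circ\tilde{\tau}^{-1}$.
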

\begin{proof}
Let $f\in \mathbb{R}^{\Omega}$ and $x\in \mathbb{R}.$ Suppose $x\in I_{\omega}$ and that $\sigma^{-1}(I_{\theta})=I_{\omega}$ for some $\theta\in \qty{0,1,\cdots,2N.}$ Then,
\begin{align*}
\tilde{\sigma}\circ \mu (f)(x)&=\tilde{\sigma}(\mu(f))(x)\\
&=\mu(f)\qty(\sigma^{-1}(x))\\
&=f\qty(\tau^{-1}(\omega))\\
&=\tilde{\tau}(f)(\omega)\\
&=\mu \circ \tilde{\tau}(f)(x),
\end{align*}
which proves \eqref{intertwin1PBWRST}. The relation \eqref{intertwin2PBWRST} follows by induction.
\end{proof}
In the next Lemma we prove an equivalence between $Sep_{\mathcal{A}}^n(\mathbb{R})$ and $Sep^n(\Omega)$, two sets which will be important in the description of the centralizer of the coefficient algebra in the skew PBW extension. First we give the definitions.
\begin{definition}
For every $n\in \mathbb{Z}$ set,
\begin{equation}
Sep_{\mathcal{A}}^n(\mathbb{R}):=\qty{x\in \mathbb{R}\ :\ (\exists \ h\in \mathcal{A}),\ h(x)\neq \tilde{\sigma}^n(h)(x)},
\end{equation}
\begin{equation}
Sep_{\mathbb{R}^{\Omega}}^n(\Omega):=\qty{\omega\in \Omega\ :\ \qty(\exists \ f\in \mathbb{R}^{\Omega}),\ f(\omega)\neq \tilde{\tau}^n(f)(\omega)},
\end{equation}
and 
\begin{equation}
Sep^n(\Omega):=\qty{\omega\in \Omega\ :\ \tau^n(\omega)\neq \omega}.
\end{equation}
\end{definition}
We have the following.
\begin{lemma}\label{SeplemPBWRST}
Let $x\in I_{\omega}\subset \mathbb{R}.$ Then $x\in Sep_{\mathcal{A}}^n(\mathbb{R})$ if and only if $\omega\in Sep^n(\Omega).$
\end{lemma}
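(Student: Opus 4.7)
The plan is to transport the separation property from $\mathcal{A}$ to $\mathbb{R}^{\Omega}$ through the isomorphism $\mu$, using the intertwining relation \eqref{intertwin2PBWRST}. The key observation is that for $x\in I_{\omega}$, the identity $\mu(f)(x)=f(\omega)$ together with $\tilde{\sigma}^{n}\circ\mu=\mu\circ\tilde{\tau}^{n}$ lets us rewrite any separation witness in $\mathcal{A}$ at the point $x$ as a separation witness in $\mathbb{R}^{\Omega}$ at the index $\omega$, and vice versa.

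For the forward direction, I would take $x\in I_{\omega}$ with $x\in Sep_{\mathcal{A}}^{n}(\mathbb{R})$ and a witness $h\in\mathcal{A}$ such that $h(x)\neq\tilde{\sigma}^{n}(h)(x)$. Since $\mu$ is surjective, write $h=\mu(f)$. Then $h(x)=\mu(f)(x)=f(\omega)$, while by \eqref{intertwin2PBWRST},
\[
\tilde{\sigma}^{n}(h)(x)=\tilde{\sigma}^{n}(\mu(f))(x)=\mu(\tilde{\tau}^{n}(f))(x)=\tilde{\tau}^{n}(f)(\omega)=f(\tau^{-n}(\omega)).
\]
So $f(\omega)\neq f(\tau^{-n}(\omega))$, which forces $\tau^{-n}(\omega)\neq\omega$, hence $\tau^{n}(\omega)\neq\omega$, i.e.\ $\omega\in Sep^{n}(\Omega)$.

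For the converse, suppose $\omega\in Sep^{n}(\Omega)$, so $\tau^{n}(\omega)\neq\omega$, and therefore $\tau^{-n}(\omega)\neq\omega$ as well. I would construct an explicit separating element by taking the indicator $f\in\mathbb{R}^{\Omega}$ with $f(\omega)=1$ and $f(\omega')=0$ for $\omega'\neq\omega$, then setting $h=\mu(f)\in\mathcal{A}$. For $x\in I_{\omega}$ we get $h(x)=f(\omega)=1$, whereas the same computation as above gives $\tilde{\sigma}^{n}(h)(x)=f(\tau^{-n}(\omega))=0$. Hence $h(x)\neq\tilde{\sigma}^{n}(h)(x)$ and $x\in Sep_{\mathcal{A}}^{n}(\mathbb{R})$.

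There is no real obstacle here: once Proposition \ref{IsomoPropRST} and the intertwining relation \eqref{intertwin2PBWRST} are in hand, the argument is essentially bookkeeping. The only point requiring a little care is the equivalence $\tau^{n}(\omega)\neq\omega\iff\tau^{-n}(\omega)\neq\omega$, which holds because $\tau$ is a bijection, and the fact that $\mu$ is surjective so that every potential witness $h\in\mathcal{A}$ actually comes from some $f\in\mathbb{R}^{\Omega}$. Both were established in the preceding material.
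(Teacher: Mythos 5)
Your proof is correct and follows essentially the same route as the paper: transporting separation witnesses back and forth through the isomorphism $\mu$ by means of the intertwining relation \eqref{intertwin2PBWRST}. The only cosmetic difference is that you verify $Sep_{\mathbb{R}^{\Omega}}^n(\Omega)=Sep^n(\Omega)$ by hand, via the explicit indicator function and the equivalence $\tau^{n}(\omega)\neq\omega\iff\tau^{-n}(\omega)\neq\omega$, where the paper simply invokes the fact that $\mathbb{R}^{\Omega}$ separates points.
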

\begin{proof}
Since the algebra $\mathbb{R}^{\Omega}$ separates points, then $Sep_{\mathbb{R}^{\Omega}}^n(\Omega)=Sep^n(\Omega)$ for every $n\in \mathbb{Z}.$ Therefore, it suffices to prove that $x\in I_{\omega}\subset \mathbb{R}$ belongs to $Sep_{\mathcal{A}}^n(\mathbb{R})$ if and only if $\omega\in Sep_{\mathbb{R}^{\Omega}}^n(\Omega).$ To this end, we have the following.\par 
Suppose $\omega\in Sep_{\mathbb{R}^{\Omega}}^n(\Omega).$ Then there exists $f\in \mathbb{R}^{\Omega}$ such that $\tilde{\tau}^n(f)(\omega)\neq f(\omega).$ Since $\mu$ is injective, then $\tilde{\tau}^n(f)(\omega)\neq f(\omega)$ implies that $$\mu \circ \tilde{\tau}^n(f)(x)\neq \mu (f)(x)\ \ \forall \ x\in I_{\omega}.$$
But from \eqref{intertwin2PBWRST}, $\mu \circ \tilde{\tau}^n=\tilde{\sigma}^n\circ \mu.$ Therefore,
$$\tilde{\sigma}^n\qty(\mu (f))(x)\neq \mu(f)(x).$$
That is, $x\in Sep_{\mathcal{A}}^n(\mathbb{R}).$\par 
Conversely, suppose $x\in Sep_{\mathcal{A}}^n(\mathbb{R}).$ Then there exists $h\in \mathcal{A}$ such that $\tilde{\sigma}^n(h)(x)\neq h(x).$ Using injectivity of $\mu^{-1},$ we get 
$$\qty(\mu^{-1}\circ\tilde{\sigma}^n)(h)(\omega)\neq \mu^{-1}(h)(\omega).$$
Again, using \eqref{intertwin2PBWRST}, we get that $\mu^{-1}\circ \tilde{\sigma}^n=\tilde{\tau}^n\circ \mu^{-1}.$ Therefore
$$
\tilde{\tau}^n\qty(\mu^{-1}(h))(\omega)\neq \qty(\mu^{-1}(h))(\omega),
$$
and hence $\omega\in Sep_{\mathbb{R}^{\Omega}}^n(\Omega).$
\end{proof}
From Proposition \ref{IsomoPropRST} and Lemma \ref{SeplemPBWRST} above, it follows that we can consider the algebra of functions on a finite set. Indeed in the following section we consider the skew PBW extension of the algebra $\mathbb{R}^{\Omega}$ of functions on a finite set $\Omega$ and then deduce the corresponding results in the case of the skew PBW extension of the algebra of piecewise constant functions on $\mathbb{R}$ via the isomorphism $\mu$.
\subsection{Algebra of functions on a finite set}
Let $\Omega=\{0,1,\cdots,2N\}$ be a finite set and let $\mathbb{R}^{\Omega}=\qty{f:\Omega\to \mathbb{R}}$ denote the algebra of real-valued functions on $\Omega$ with respect to the usual pointwise operations. By writing $f_k:=f(k)$, $\mathbb{R}^{\Omega}$ can be identified with $\mathbb{R}^{2N+1}$ where $\mathbb{R}^{2N+1}$ is equipped with the usual operations of pointwise addition, scalar multiplication and multiplication defined by
$$xy=(x_1y_1,x_2y_2,\cdots,x_ny_n)$$ for every $x=(x_1,x_2,\cdots,x_n)$ and $y=(y_1,y_2,\cdots,y_n).$\par 
Now, for $1\leqslant i\leqslant n,$ let $\tau_i :\Omega\to \Omega$ be a bijection such that $\mathbb{R}^{\Omega}$ is invariant under $\tau_i$ and $\tau_i^{-1}$, (that is both $\tau_i$ and $\tau_i^{-1}$ are permutations on $\Omega$). For $1\leqslant i\leqslant n$ let $\tilde{\tau}_i:\mathcal{A}\to \mathcal{A}$ be the automorphism induced by $\tau_i,$ that is 
\begin{equation}\label{TauPBWRST}
\tilde{\tau}_i(f)=f\circ \tau_i^{-1}
\end{equation}
for every $f\in \mathbb{R}^{\Omega}$ and let $\delta_i, 1\leqslant i\leqslant n$ be a $\tilde{\tau}_i-$derivation. Consider the skew-PBW extension $\tilde{\tau}\qty(\mathbb{R}^{\Omega})<x_1,\cdots,x_n>.$\par
The following definition is important in the description of the centralizer of $\mathbb{R}^{\Omega}$ in the skew PBW extension $\tilde{\tau}\qty(\mathbb{R}^{\Omega})<x_1,\cdots,x_n>.$ 
\begin{definition}\label{sepdef}
For $\alpha=(\alpha_1,\alpha_2,\cdots,\alpha_n)\in \mathbb{N}^n$, define
\begin{itemize}
\item[(a)] $Sep^{\alpha}(\Omega):=\qty{\omega\in \Omega\ :\ \tau^{\alpha}(\omega)\neq \omega};$
\item[(b)] $Per^{\alpha}(\Omega):=\qty{\omega\in \Omega\ :\ \tau^{\alpha}(\omega)=\omega}.$
\end{itemize}
\end{definition}
\subsection{Centralizers in skew PBW extensions for function algebras}\label{centPBWfunalg}
In this section we describe the centralizer of $\mathbb{R}^{\Omega}$ in the skew PBW extension $\tilde{\tau}\qty(\mathbb{R}^{\Omega})<x_1,\cdots,x_n>.$ We start by describing the centralizer in the quasi-commutative case and then state a necessary condition for an element to belong to the centralizer of $\mathbb{R}^{\Omega}$ in the general case. We finish by giving the  description of the center of the skew PBW extension in the quasi-commutative case. 
\subsubsection{The centralizer of $\mathbb{R}^{\Omega}$}
\begin{theorem}\label{centthmPBWRST}
Suppose that for $1\leqslant i\leqslant n,\ \delta_i=0.$ Then the centralizer $C\qty(\mathbb{R}^{\Omega}),$ of $\mathbb{R}^{\Omega}$ in the skew $PBW$ extension $\tilde{\tau}(\mathbb{R}^{\Omega})<x_1,\cdots,x_n>$ is given by
$$C\qty(\mathbb{R}^{\Omega})=\qty{\sum_{\alpha}f_{\alpha}x^{\alpha}\ :\ f_{\alpha}=0\text{ on }Sep^{\alpha}(\Omega)}.$$
\end{theorem}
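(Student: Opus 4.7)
The plan is to invoke Theorem \ref{thm1PBWRST} (which already handles a general commutative coefficient ring in the quasi-commutative case) and then translate its abstract condition $(\sigma^{\alpha}(r)-r)f_{\alpha}=0$ into a statement about the support of $f_{\alpha}$ in $\Omega$. Since $\mathbb{R}^{\Omega}$ is commutative and the hypothesis $\delta_{i}=0$ is exactly the hypothesis of Theorem \ref{thm1PBWRST}, we immediately obtain
\[
C(\mathbb{R}^{\Omega})=\Bigl\{\sum_{\alpha}f_{\alpha}x^{\alpha}\ :\ (\forall f\in\mathbb{R}^{\Omega}),\ (\tilde{\tau}^{\alpha}(f)-f)\,f_{\alpha}=0\Bigr\},
\]
where $\tilde{\tau}^{\alpha}:=\tilde{\tau}_{1}^{\alpha_{1}}\cdots\tilde{\tau}_{n}^{\alpha_{n}}$. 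So the entire task reduces to rewriting the identity $(\tilde{\tau}^{\alpha}(f)-f)f_{\alpha}=0$ in terms of $Sep^{\alpha}(\Omega)$.

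Next I would unpack the condition pointwise. Since multiplication in $\mathbb{R}^{\Omega}$ is pointwise, the identity $(\tilde{\tau}^{\alpha}(f)-f)f_{\alpha}=0$ is equivalent to
\[
\bigl(f(\tau^{-\alpha}(\omega))-f(\omega)\bigr)f_{\alpha}(\omega)=0\qquad\text{for every }\omega\in\Omega\text{ and every }f\in\mathbb{R}^{\Omega},
\]
using the definition $\tilde{\tau}(f)=f\circ\tau^{-1}$ from \eqref{TauPBWRST}. I would then split on the two cases given by Definition \ref{sepdef}. If $\omega\in Per^{\alpha}(\Omega)$ then $\tau^{\alpha}(\omega)=\omega$, equivalently $\tau^{-\alpha}(\omega)=\omega$, so the left factor vanishes for every $f$ and no restriction is placed on $f_{\alpha}(\omega)$. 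If instead $\omega\in Sep^{\alpha}(\Omega)$, then $\tau^{-\alpha}(\omega)\neq\omega$, and because $\mathbb{R}^{\Omega}$ separates points on $\Omega$ I can exhibit a specific $f$ (for example the indicator function of $\{\omega\}$) for which $f(\tau^{-\alpha}(\omega))-f(\omega)\neq 0$; this forces $f_{\alpha}(\omega)=0$.

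Combining the two cases yields that the condition $(\tilde{\tau}^{\alpha}(f)-f)f_{\alpha}=0$ for all $f\in\mathbb{R}^{\Omega}$ is equivalent to $f_{\alpha}\equiv 0$ on $Sep^{\alpha}(\Omega)$, which is precisely the claimed description. I expect the entire argument to be essentially bookkeeping: the only place demanding some care is consistently tracking that $\tilde{\tau}^{\alpha}$ acts by $\tau^{-\alpha}$ and that $\tau^{\alpha}(\omega)=\omega$ is the same as $\tau^{-\alpha}(\omega)=\omega$, so that the separation set $Sep^{\alpha}(\Omega)$ as defined really governs the obstruction. No deeper obstacle is anticipated, since the hard content (the general commutative-ring description) is already supplied by Theorem \ref{thm1PBWRST}.
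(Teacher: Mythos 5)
Your proposal is correct and follows essentially the same route as the paper: invoke Theorem \ref{thm1PBWRST} to reduce to the condition $(\tilde{\tau}^{\alpha}(r)-r)f_{\alpha}=0$ for all $r$, then split $\Omega$ into $Per^{\alpha}(\Omega)$ and $Sep^{\alpha}(\Omega)$ and use that $\mathbb{R}^{\Omega}$ separates points (the paper cites $Sep_{\mathbb{R}^{\Omega}}^{\alpha}(\Omega)=Sep^{\alpha}(\Omega)$ where you exhibit an indicator function explicitly). The only difference is that you spell out the pointwise bookkeeping with $\tau^{-\alpha}$ more explicitly, which is fine.
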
 
\begin{proof}
Using the results of Theorem \ref{thm1PBWRST}, an element $f=\sum\limits_{\alpha}f_{\alpha}x^{\alpha}\in \tilde{\tau}\qty(\mathbb{R}^{\Omega})<x_1,\cdots,x_n>$ belongs to the centralizer of $\mathbb{R}^{\Omega}$ if and only if for every $r\in \mathbb{R}^{\Omega},$
$$f_{\alpha}\qty(\tilde{\tau}^{\alpha}(r)-r)=0.$$
Since $\qty(\tilde{\tau}^{\alpha}(r)-r)(y)=0$ for every $y\in Per_{\mathbb{R}^{\Omega}}^{\alpha}(\Omega),$ then $fr=rf$ for every $r\in \mathbb{R}^{\Omega}$ if and only if $f_{\alpha}=0$ on $Sep_{\mathbb{R}^{\Omega}}^{\alpha}(\Omega)$. Since $Sep_{\mathbb{R}^{\Omega}}^{\alpha}(\Omega)=Sep^{\alpha}(\Omega),$ we have,
$$C\qty(\mathbb{R}^{\Omega})=\qty{\sum_{\alpha}f_{\alpha}x^{\alpha}\ :\ f_{\alpha}=0\text{ on }Sep^{\alpha}(\Omega)}.$$
\end{proof}
Now consider the case when $\delta_i\neq 0.$ In \cite{Richter1PBWRST} a necessary condition for an element $\sum\limits_{k=0}^mf_kx^k$ in the Ore extension  $\mathbb{R}^{\Omega}[x,\tilde{\tau},\delta]$ to belong to the centralizer of $\mathbb{R}^{\Omega}$ was stated and the following Theorem was proved.
\begin{theorem}\label{centerOre}
If an element of degree $m,$ $\sum\limits_{k=0}^mf_kx^k\in \mathbb{R}^{\Omega}[x,\tilde{\tau},\delta]$ belongs to the centralizer of $\mathbb{R}^{\Omega}$, then $f_m=0 \text{ on } Sep^m(\Omega).$
\end{theorem}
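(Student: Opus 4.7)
The plan is to view $\mathbb{R}^{\Omega}[x,\tilde{\tau},\delta]$ as the one-variable skew PBW extension $\sigma(\mathbb{R}^{\Omega})\langle x\rangle$ and invoke Theorem \ref{thm2PBWRST} with $R = \mathbb{R}^{\Omega}$ and $n=1$. Applied to $f = \sum_{k=0}^{m} f_k x^k \in C(\mathbb{R}^{\Omega})$, that theorem immediately yields
\[
\bigl(\tilde{\tau}^{k}(r) - r\bigr) f_k = 0 \qquad \text{for every } r \in \mathbb{R}^{\Omega} \text{ and every } 0 \le k \le m,
\]
so in particular for the leading coefficient $f_m$. If one prefers to keep the argument self-contained, the same identity can be recovered by hand: using $x^k r = \tilde{\tau}^k(r) x^k + p_{k,r}$ with $\deg(p_{k,r}) < k$, one compares the coefficient of $x^m$ in $fr = rf$ and invokes commutativity of $\mathbb{R}^{\Omega}$.

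The real content of the statement is the passage from the algebraic identity $(\tilde{\tau}^m(r) - r) f_m = 0$ to the pointwise vanishing of $f_m$ on $Sep^m(\Omega)$, and this is done by a well-chosen test function. Fix $\omega \in Sep^m(\Omega)$, so that $\tau^m(\omega) \neq \omega$ and, by bijectivity of $\tau$, also $\tau^{-m}(\omega) \neq \omega$. Let $\chi_\omega \in \mathbb{R}^{\Omega}$ be the indicator function of $\{\omega\}$. From \eqref{TauPBWRST},
\[
\tilde{\tau}^m(\chi_\omega)(\omega) = \chi_\omega\bigl(\tau^{-m}(\omega)\bigr) = 0, \qquad \chi_\omega(\omega) = 1,
\]
so $(\tilde{\tau}^m(\chi_\omega) - \chi_\omega)(\omega) = -1$. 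Evaluating $(\tilde{\tau}^m(\chi_\omega) - \chi_\omega)\, f_m = 0$ at $\omega$ yields $-f_m(\omega) = 0$, i.e.\ $f_m(\omega) = 0$. Since $\omega \in Sep^m(\Omega)$ was arbitrary, $f_m$ vanishes on $Sep^m(\Omega)$, as claimed.

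There is no substantive obstacle here; the heavy lifting was already carried out in Theorem \ref{thm2PBWRST}, and the only additional ingredient is the point-separation property of $\mathbb{R}^{\Omega}$, which makes indicator functions $\chi_\omega$ available as sharp test elements. The one subtlety worth highlighting is that the argument only controls the \emph{leading} coefficient: the lower-order coefficients also satisfy $(\tilde{\tau}^k(r)-r) f_k = 0$, but the residual terms $p_{k,r}$ arising from the derivation $\delta$ obstruct cascading the same pointwise conclusion to $f_0,\ldots,f_{m-1}$, which is precisely why the non-quasi-commutative case gives only a necessary condition rather than a full description.
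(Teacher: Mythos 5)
Your proof is correct and follows essentially the route the paper takes: Theorem \ref{centerOre} itself is only quoted from \cite{Richter1PBWRST}, but the paper's proof of its multivariable analogue, Theorem \ref{centNCPBWRST}, is exactly your argument --- apply Theorem \ref{thm2PBWRST} to get $(\tilde{\tau}^{m}(r)-r)f_m=0$ for all $r$ and then use that $\mathbb{R}^{\Omega}$ separates points; your evaluation against the indicator function $\chi_{\omega}$ simply makes that last step explicit, which is if anything cleaner than the paper's terse treatment. One small caution about your closing aside: for the lower coefficients even the algebraic identity $(\tilde{\tau}^{k}(r)-r)f_k=0$ is not actually delivered by the coefficient comparison (the terms $p_{k',r}$ coming from the higher powers $x^{k'}$, $k'>k$, contribute to the coefficient of $x^{k}$), but since your proof only uses the leading coefficient, for which you supply the self-contained comparison, this does not affect the argument.
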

We aim to extend this theorem to the skew PBW extension  $\tilde{\tau}(\mathbb{R})<x_1,\cdots,x_n>$ of which the Ore extension $\mathbb{R}^{\Omega}[x,\tilde{\tau},\delta]$ is a special case. This extension is given in the following Theorem.
\begin{theorem}\label{centNCPBWRST}
If an element $\sum_{\alpha}f_{\alpha}x^{\alpha}\in \tilde{\tau}(\mathbb{R}^{\Omega})<x_1,\cdots,x_n>$ belongs to the centralizer of $\mathbb{R}^{\Omega},$ then $f_{\alpha}=0$ on $Sep^{\alpha}(\Omega).$
\end{theorem}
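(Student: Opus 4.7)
The plan is to reduce directly to Theorem \ref{thm2PBWRST}. Since $\mathbb{R}^{\Omega}$ (with pointwise operations) is a commutative ring, that theorem applies with $R=\mathbb{R}^{\Omega}$ and $\sigma_i=\tilde{\tau}_i$. Hence if $f=\sum_{\alpha}f_{\alpha}x^{\alpha}$ lies in the centralizer of $\mathbb{R}^{\Omega}$, one has
$$(\tilde{\tau}^{\alpha}(r)-r)\,f_{\alpha}=0$$
for every $r\in\mathbb{R}^{\Omega}$ and every $\alpha\in\mathbb{N}^n$. All that remains is to exhibit, for each $\alpha$ and each $\omega\in Sep^{\alpha}(\Omega)$, a specific $r\in\mathbb{R}^{\Omega}$ making $(\tilde{\tau}^{\alpha}(r)-r)(\omega)\neq 0$; because multiplication in $\mathbb{R}^{\Omega}$ is pointwise, this immediately forces $f_{\alpha}(\omega)=0$.

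To construct such an $r$, fix $\omega\in Sep^{\alpha}(\Omega)$, so $\tau^{\alpha}(\omega)\neq\omega$, and applying $(\tau^{\alpha})^{-1}$ to both sides yields $(\tau^{\alpha})^{-1}(\omega)\neq\omega$. Take $r=\chi_{\{\omega\}}\in\mathbb{R}^{\Omega}$, the characteristic function of $\{\omega\}$. From the defining relation $\tilde{\tau}_i(g)=g\circ\tau_i^{-1}$, iterated to give $\tilde{\tau}^{\alpha}(g)=g\circ(\tau^{\alpha})^{-1}$, one computes
$$\tilde{\tau}^{\alpha}(r)(\omega)=r\bigl((\tau^{\alpha})^{-1}(\omega)\bigr)=0,\qquad r(\omega)=1,$$
so $(\tilde{\tau}^{\alpha}(r)-r)(\omega)=-1$. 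Evaluating the identity $(\tilde{\tau}^{\alpha}(r)-r)\,f_{\alpha}=0$ at $\omega$ gives $-f_{\alpha}(\omega)=0$. Since $\omega\in Sep^{\alpha}(\Omega)$ and $\alpha\in\mathbb{N}^n$ were arbitrary, $f_{\alpha}=0$ on $Sep^{\alpha}(\Omega)$ for every $\alpha$.

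There is no serious obstacle: Theorem \ref{thm2PBWRST} has already done the hard algebraic work of isolating the leading coefficients from the lower-degree residues $p_{\alpha,r}$ that appear when the derivations $\delta_i$ are nonzero. The only additional input needed here is the elementary observation that on the function algebra $\mathbb{R}^{\Omega}$ the element $\tilde{\tau}^{\alpha}(r)-r$ can be made non-vanishing at any prescribed $\omega\in Sep^{\alpha}(\Omega)$ by an appropriate choice of characteristic function. The only bookkeeping care required is the potential non-commutativity of the $\tau_i$; this is harmless provided one fixes a single convention for $\tau^{\alpha}=\tau_1^{\alpha_1}\cdots\tau_n^{\alpha_n}$ (and the matching convention for $\tilde{\tau}^{\alpha}$), after which the equivalence $\tau^{\alpha}(\omega)\neq\omega\iff(\tau^{\alpha})^{-1}(\omega)\neq\omega$ is automatic.
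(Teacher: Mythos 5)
Your argument is correct and follows essentially the same route as the paper: apply Theorem \ref{thm2PBWRST} with $R=\mathbb{R}^{\Omega}$ to get $(\tilde{\tau}^{\alpha}(r)-r)f_{\alpha}=0$ for all $r$, then use the fact that $\mathbb{R}^{\Omega}$ separates points to force $f_{\alpha}=0$ on $Sep^{\alpha}(\Omega)$. Your explicit witness $r=\chi_{\{\omega\}}$ simply makes concrete the paper's appeal to the identity $Sep_{\mathbb{R}^{\Omega}}^{\alpha}(\Omega)=Sep^{\alpha}(\Omega)$.
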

\begin{proof}
Again, using Theorem \ref{thm2PBWRST}, we see that if an element $f=\sum_{\alpha}f_{\alpha}x^{\alpha}\in \tilde{\tau}(\mathbb{R}^{\Omega})<x_1,\cdots,x_n>$ belongs to the centralizer of $\mathbb{R}^{\Omega},$ then
\begin{equation}\label{centNCeqPBWRST}
\qty(r-\tilde{\tau}^r(r))f_{\alpha}=0 \ \  \forall \ \alpha \in \mathbb{N}^n.
\end{equation}
Equation \eqref{centNCeqPBWRST} holds on $Per_{\mathbb{R}^{\Omega}}^{\alpha}(\Omega)$ and holds on $Sep_{\mathbb{R}^{\Omega}}^{\alpha}(\Omega)$ if $f_{\alpha}=0.$ The conclusion follows from the fact that $Sep_{\mathbb{R}^{\Omega}}^{\alpha}(\Omega)=Sep^{\alpha}(\Omega)$ for all $\alpha\in \mathbb{N}^n.$
\end{proof}
\subsubsection{Center in the quasi-commutative case}
In this section we give the description of the center of the skew PBW extension $\tilde{\tau}\qty(\mathbb{R}^{\Omega})<x_1,\cdots,x_n>$ in the quasi-commutative case, Definition \ref{quasicomm}.  We start with a result which will be important in the description of the center.
 \begin{lem}
 Let $\tilde{\tau}\qty(\mathbb{R}^{\Omega})<x_1,\cdots,x_n>$ be a quasi-commutative PBW extension. Then for every $1\leqslant i,j\leqslant n$ and every $m\in \mathbb{N},$
 \begin{itemize}
 \item[(a)] $x_jx_i^m=\qty(\prod\limits_{k=0}^{m-1}\tilde{\tau}_i^k(c_{ij}))x_i^mx_j.$
 \item[(b)] $x_j^mx_i=\qty(\prod\limits_{k=0}^{m-1}\tilde{\tau}_j^k(c_{ij}))x_ix_j^m.$
 \end{itemize}
 \end{lem}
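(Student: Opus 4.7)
The plan is to prove both parts by induction on $m$, exploiting two ingredients that are available in the quasi-commutative setting: the commutation relation $x_jx_i = c_{ij}x_ix_j$ from condition (d'), and the fact that $\delta_i=0$ forces $x_ir = \tilde{\tau}_i(r)x_i$ for every $r\in\mathbb{R}^{\Omega}$ from condition (c') combined with Proposition \ref{prop1RST}.

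Before doing the main induction I would record a simple sub-lemma: for every $r\in\mathbb{R}^{\Omega}$ and every $m\in\mathbb{N}$, $x_i^m r = \tilde{\tau}_i^m(r)\,x_i^m$. This follows by a one-line induction: $x_i^{m+1}r = x_i(x_i^m r) = x_i\tilde{\tau}_i^m(r)x_i^m = \tilde{\tau}_i^{m+1}(r)x_i^{m+1}$. The analogous identity holds with $i$ replaced by $j$.

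For part (a), the base case $m=1$ is exactly the defining relation $x_jx_i = c_{ij}x_ix_j$, which matches the empty-shift factor $\tilde{\tau}_i^0(c_{ij}) = c_{ij}$. For the inductive step, assuming the formula for $m$, I compute
\begin{align*}
x_j x_i^{m+1} &= (x_j x_i^m)\,x_i = \qty(\prod_{k=0}^{m-1}\tilde{\tau}_i^k(c_{ij}))\,x_i^m x_j x_i \\
&= \qty(\prod_{k=0}^{m-1}\tilde{\tau}_i^k(c_{ij}))\,x_i^m c_{ij} x_i x_j = \qty(\prod_{k=0}^{m-1}\tilde{\tau}_i^k(c_{ij}))\,\tilde{\tau}_i^m(c_{ij})\,x_i^{m+1} x_j,
\end{align*}
where in the last step I invoke the sub-lemma to slide $c_{ij}$ past $x_i^m$. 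Collecting the extra factor $\tilde{\tau}_i^m(c_{ij})$ into the product (this is where commutativity of $\mathbb{R}^{\Omega}$ matters, so that the product notation is unambiguous and the new factor can be appended on the right) gives $\prod_{k=0}^{m}\tilde{\tau}_i^k(c_{ij})$, which is exactly what the formula predicts.

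Part (b) is the mirror argument: again $m=1$ is the defining relation, and for the inductive step I would write $x_j^{m+1}x_i = x_j(x_j^m x_i)$, push the single $x_j$ past the product $\prod_{k=0}^{m-1}\tilde{\tau}_j^k(c_{ij})\in\mathbb{R}^{\Omega}$ using the sub-lemma for $j$ (which re-indexes the product to $\prod_{k=1}^{m}\tilde{\tau}_j^k(c_{ij})$), then apply $x_jx_i = c_{ij}x_ix_j$ once more to produce the missing $k=0$ factor. No real obstacle is anticipated; the only place one must be careful is the bookkeeping of the index range in the product and the observation that commutativity of $\mathbb{R}^{\Omega}$ lets the factors $\tilde{\tau}^k(c_{ij})$ be freely reordered.
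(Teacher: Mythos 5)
Your proof is correct and follows essentially the same route as the paper: induction on $m$ using the quasi-commutative relations $x_jx_i=c_{ij}x_ix_j$ and $x_ir=\tilde{\tau}_i(r)x_i$, with the factor $\tilde{\tau}_i^m(c_{ij})$ appearing when $c_{ij}$ is slid past $x_i^m$. Your explicit sub-lemma $x_i^m r=\tilde{\tau}_i^m(r)x_i^m$ and your written-out mirror argument for part (b) are details the paper uses implicitly or omits ("a similar proof"), but they do not constitute a different approach.
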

 \begin{proof}
 \begin{itemize}
 \item[(a)]The case $m=1$ corresponds to condition (c') in Definition \ref{quasicomm}
  and for $m=2$ we have
 \begin{align*}
 x_jx_i^2&=\qty(x_jx_i)x_i\\
 &=\qty(c_{ij}x_ix_j)x_i\\
 &=c_{ij}x_i(x_jx_i)\\
 &=c_{ij}x_i\qty(c_{ij})x_ix_j\\
 &=c_{ij}\tilde{\tau}_i(c_{ij})x_i^2x_j.
 \end{align*}
 Suppose the formula holds for all positive integers up to and including $m.$ Then 
 \begin{align*}
 x_jx_i^{m+1}&=\qty(x_jx_i^m)x_i\\
 &=\qty(\qty(\prod\limits_{k=0}^{m-1}\tilde{\tau}_i^k(c_{ij}))x_i^mx_j)x_i\\
 &=\qty(\prod\limits_{k=0}^{m-1}\tilde{\tau}_i^k(c_{ij}))\qty(x_i^mc_{ij})x_ix_j\\
 &=\qty(\prod\limits_{k=0}^{m-1}\tilde{\tau}_i^k(c_{ij}))\tilde{\tau}_i^mx_i^{m+1}x_j\\
 &=\qty(\prod\limits_{k=0}^{m}\tilde{\tau}_i^k(c_{ij}))x_i^{m+1}x_j.
 \end{align*}
 A similar proof can be done for part (b).
 \end{itemize}
 \end{proof}
 Using these formulas we can derive necessary and sufficient conditions for an element to belong to the center. We state these conditions in the following Theorem.
 \begin{theorem}
 An element $f=\sum\limits_{\alpha}f_{\alpha}x^{\alpha}$ belongs to the center of the quasi-commutative skew PBW extension $\tilde{\tau}\qty(\mathbb{R}^{\Omega})<x_1,\cdots,x_n>$ if and only if $f_{\alpha}=0$ on $Sep^{\alpha}(\Omega)$ and  for every $1\leqslant i\leqslant n$
 \begin{align*}
 \tilde{\tau}_i(f_{\alpha})\prod_{j=1}^{i-1}\qty(\prod_{k_j=0}^{\alpha_j-1}\tilde{\tau}_1^{\alpha_1}\tilde{\tau}_2^{\alpha_2}\cdots\tilde{\tau}_{j-2}^{\alpha_{j-2}}\tilde{\tau}_j^{k_j}(c_{j,i}))=\\
   f_{\alpha}\prod_{j=1}^{i+2}\qty(\prod_{k_{n-j+1}=0}^{\alpha_{n-j+1}-1}\tilde{\tau}_1^{\alpha_1}\cdots\tilde{\tau}_i^{\alpha_1}\tilde{\tau}_{n-i+1}^{\alpha_{n-i+1}}\cdots\tilde{\tau}_{n-j}^{\alpha_{n-j}}\tilde{\tau}_{n-j+1}^{k_{n-j+1}}(c_{i,n-j+1}))
\end{align*}
\end{theorem}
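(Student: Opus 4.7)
The plan is to characterize centrality of $f=\sum_\alpha f_\alpha x^\alpha$ by splitting the condition into two pieces: $f$ must commute with every element of $\mathbb{R}^\Omega$, and $f$ must commute with each generator $x_i$. Since $\mathbb{R}^\Omega$ together with the $x_i$ generate the whole PBW extension as an algebra, these two conditions are jointly equivalent to centrality. The first piece is handled directly by Theorem~\ref{centthmPBWRST}, which applies because we are in the quasi-commutative (hence $\delta_i=0$) setting; it yields exactly the condition $f_\alpha = 0$ on $Sep^{\alpha}(\Omega)$ for every $\alpha\in\mathbb{N}^n$. The remaining work is to translate $x_i f = f x_i$ (for every $1\leqslant i\leqslant n$) into the displayed identity.

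For $x_i f$, I would first commute $x_i$ with each coefficient using $x_i f_\beta = \tilde{\tau}_i(f_\beta) x_i$, and then push $x_i$ rightward through $x_1^{\alpha_1}\cdots x_{i-1}^{\alpha_{i-1}}$. In a quasi-commutative PBW extension the $c_{ij}$ are invertible with $c_{ij}c_{ji}=1$ (apply $x_jx_i=c_{ij}x_ix_j$ together with its swap $i\leftrightarrow j$), so part~(a) of the preceding lemma rearranges to give, for $j<i$, the formula
\[
x_i x_j^{\alpha_j} = \Bigl(\prod_{k=0}^{\alpha_j-1}\tilde{\tau}_j^k(c_{j,i})\Bigr)\, x_j^{\alpha_j} x_i.
\]
Iterating this for $j=1,\dots,i-1$ and pushing each newly produced $R$-coefficient leftward through the earlier $x_\ell^{\alpha_\ell}$ (using $x_\ell^{\alpha_\ell} r = \tilde{\tau}_\ell^{\alpha_\ell}(r)\, x_\ell^{\alpha_\ell}$) yields $x_i x^\alpha = A_i^\alpha\, x^{\alpha+e_i}$, where $A_i^\alpha$ is the product of factors $\tilde{\tau}_1^{\alpha_1}\cdots\tilde{\tau}_{j-1}^{\alpha_{j-1}}\tilde{\tau}_j^{k_j}(c_{j,i})$ appearing on the left-hand side of the theorem's equation.

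For $f x_i$, I would symmetrically push $x_i$ leftward through $x_{i+1}^{\alpha_{i+1}}\cdots x_n^{\alpha_n}$ using part~(b) of the lemma, and then commute the accumulated $R$-coefficient past $x_1^{\alpha_1}\cdots x_i^{\alpha_i}$, picking up an outer $\tilde{\tau}_1^{\alpha_1}\cdots\tilde{\tau}_i^{\alpha_i}$. This yields $x^\alpha x_i = B_i^\alpha\, x^{\alpha+e_i}$, where $B_i^\alpha$ is the product on the right-hand side of the theorem. Because in the quasi-commutative case each of $x_i x^\beta$ and $x^\beta x_i$ is a scalar multiple of the single basis monomial $x^{\beta+e_i}$, matching the coefficient of $x^{\alpha+e_i}$ in $x_i f = f x_i$ monomial by monomial gives precisely $\tilde{\tau}_i(f_\alpha)\, A_i^\alpha = f_\alpha\, B_i^\alpha$ for every $\alpha$, which is the asserted identity.

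The main obstacle is the combinatorial bookkeeping in the iterated normalization that produces the closed forms $A_i^\alpha$ and $B_i^\alpha$; I would set this up as an induction on the number of $x_j^{\alpha_j}$ factors that $x_i$ must traverse, relying only on the preceding lemma and the elementary identity $x_\ell^m r = \tilde{\tau}_\ell^m(r) x_\ell^m$. Once the closed forms are in hand both directions of the equivalence drop out: the ``only if'' direction is the coefficient comparison, and the ``if'' direction follows because commutation with all of $\mathbb{R}^\Omega$ and with each $x_i$ forces commutation with the entire algebra they generate.
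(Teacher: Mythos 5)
Your proposal follows essentially the same route as the paper's proof: centrality is split into membership in $C\qty(\mathbb{R}^{\Omega})$ (handled by Theorem~\ref{centthmPBWRST}, giving $f_{\alpha}=0$ on $Sep^{\alpha}(\Omega)$) together with $x_if=fx_i$ for each $i$, and the latter is reduced to the displayed identity by normalizing $x_ix^{\alpha}$ and $x^{\alpha}x_i$ to multiples of $x^{\alpha+e_i}$ via the preceding lemma and the relation $x_\ell^{m}r=\tilde{\tau}_\ell^{m}(r)x_\ell^{m}$, then comparing left coefficients in the free $\mathbb{R}^{\Omega}$-module basis. This is exactly the computation carried out in the paper, so the plan is correct and not a genuinely different argument.
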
  
\begin{proof}
An element $f=\sum\limits_{\alpha}f_{\alpha}x^{\alpha}$ belongs to the center of the quasi-commutative skew PBW extension $\tilde{\tau}\qty(\mathbb{R}^{\Omega})<x_1,\cdots,x_n>$ if and only if $f\in C\qty(\mathbb{R}^{\Omega})$ and, for every $1\leqslant i\leqslant n,\ x_if=fx_i.$ So we compute.
\begin{align*}
x_if&=\sum_{\alpha}x_if_{\alpha}x_1^{\alpha_1}\cdots x_i^{\alpha_i}\cdots x_n^{\alpha_n}\\
&=\sum_{\alpha}\tau_i(f_{\alpha})x_ix_1^{\alpha_1}\cdots x_i^{\alpha_i}\cdots x_n^{\alpha_n}\\
&=\sum_{\alpha}\tau_i(f_{\alpha})\qty(\prod_{k_1=0}^{\alpha_1-1}\tilde{\tau}_1^{k_1}(c_{1,i}))x_1^{\alpha_1}x_ix_2^{\alpha_2}\cdots x_i^{\alpha_i}\cdots x_n^{\alpha_n}\\
&=\sum_{\alpha}\tilde{\tau}_i(f_{\alpha})\qty(\prod_{k_1=0}^{\alpha_1-1}\tilde{\tau}_1^{k_1}(c_{1,i}))\qty(\prod_{k_2=0}^{\alpha_2-1}\tilde{\tau}_1^{\alpha_1}\tilde{\tau}_2^{k_2}(c_{2,i}))x_1^{\alpha_1}x_2^{\alpha_2}x_i\cdots x_i^{\alpha_i}\cdots x_n^{\alpha_n}\\
&\qquad \vdots\\
&=\sum_{\alpha}\tilde{\tau}_i(f_{\alpha})\qty(\prod_{k_1=0}^{\alpha_1-1}\tilde{\tau}_1^{k_1}(c_{1,i}))\qty(\prod_{k_2=0}^{\alpha_2-1}\tilde{\tau}_1^{\alpha_1}\tilde{\tau}_2^{k_2}(c_{2,i}))\cdots \qty(\prod_{k_{i-1}}^{\alpha_{i-1}-1}\tilde{\tau}_1^{\alpha_1}\cdots\tilde{\tau}_{i-2}^{\alpha_{i-2}}\tilde{\tau}_{i-1}^{k_{i-1}}(c_{i-1,i}))\\
&\qquad x_1^{\alpha_1}\cdots x_i^{\alpha_i+1}\cdots x_n^{\alpha_n}\\
&=\sum_{\alpha}\tilde{\tau}_i(f_{\alpha})\prod_{j=1}^{i-1}\qty(\prod_{k_j=0}^{\alpha_j-1}\tilde{\tau}_1^{\alpha_1}\tilde{\tau}_2^{\alpha_2}\cdots\tilde{\tau}_{j-2}^{\alpha_{j-2}}\tilde{\tau}_j^{k_j}(c_{j,i}))x_1^{\alpha_1}\cdots x_i^{\alpha_i+1}\cdots x_n^{\alpha_n}.
\end{align*} 
On the other hand,
\begin{align*}
fx_i&=\qty(\sum_{\alpha}f_{\alpha}x_1^{\alpha_1}\cdots x_i^{\alpha^{\alpha_i}}\cdots x_n^{\alpha_n})\\
&=\sum_{\alpha}f_{\alpha}x_1^{\alpha_1}\cdots x_i^{\alpha^{\alpha_i}}\cdots x_{n-1}^{\alpha_{n-1}}\qty(\prod_{k_n=0}^{\alpha_n-1}\tilde{\tau}_n^{k_n}(c_{i,n}) )x_i x_n^{\alpha_n}\\
&=\sum_{\alpha}f_{\alpha}x_1^{\alpha_1}\cdots x_i^{\alpha^{\alpha_i}}\cdots x_{n-2}^{\alpha_{n-2}}\qty(\prod_{k_n=0}^{\alpha_n-1}\tilde{\tau}_{n-1}^{\alpha_{n-1}}\tilde{\tau}_n^{k_n}(c_{i,n}) )\qty(\prod_{k_{n-1}=0}^{\alpha_{n-1}-1}\tilde{\tau}_{n-1}^{k_{n-1}}(c_{i,n-1}))x_i x_{n-1}^{\alpha_{n-1}}x_n^{\alpha_n}\\
&\qquad \vdots\\
&=\sum_{\alpha}f_{\alpha}x_1^{\alpha_1}\cdots x_i^{\alpha_i}\qty(\prod_{k_n=0}^{\alpha_n-1}\tilde{\tau}_{n-i+1}^{\alpha_{n-i+1}}\tilde{\tau}_{n-i+2}^{\alpha_{n-i+2}}\cdots \tilde{\tau}_{n-1}^{\alpha_{n-1}}\tilde{\tau}_n^{k_n}(c_{i,n}))\\ &\qty(\prod_{k_{n-1}=0}^{\alpha_{n-1}-1}\tilde{\tau}_{n-i+1}^{\alpha_{n-i+1}}\cdots \tilde{\tau}_{n-2}^{\alpha_{n-2}}\tilde{\tau}_{n-1}^{\alpha_{n-1}}(c_{i,n-1}))
\qty(\prod_{k_{n-i-1}=0}^{\alpha_{n-i-1}-1}\tilde{\tau}_{n-i-1}^{k_{n-i-1}}(c_{n-i-1,i}))x_ix_{n-i-1}^{\alpha_{n-i-1}}\cdots x_n^{\alpha_n}\\
&=\sum_{\alpha}f_{\alpha}\prod_{j=1}^{i+2}\qty(\prod_{k_{n-j+1}=0}^{\alpha_{n-j+1}-1}\tilde{\tau}_1^{\alpha_1}\cdots\tilde{\tau}_i^{\alpha_1}\tilde{\tau}_{n-i+1}^{\alpha_{n-i+1}}\cdots\tilde{\tau}_{n-j}^{\alpha_{n-j}}\tilde{\tau}_{n-j+1}^{k_{n-j+1}}(c_{i,n-j+1})) x_1^{\alpha_1}\cdots x_i^{\alpha_i+1}\cdots x_n^{\alpha_n}
\end{align*}
Comparing coefficients of $ x_1^{\alpha_1} \cdots x_{i}^{\alpha_{i+1}}\cdots x_n^{\alpha_n}$ completes the proof of the theorem.
\end{proof}
\subsubsection{Some examples}
In the special case when when $n=2$ we have the following. \par 
 Let $A=\tilde{\tau}(\mathbb{R}^{\Omega})<x_1,x_2>$ and suppose an element $f=\sum\limits_{\alpha_1,\alpha_2}f_{\alpha_1,\alpha_2}x_1^{\alpha_1}x_2^{\alpha_2}\in Z(A).$ Then $x_1f=fx_1$ and $x_2f=fx_2.$ Now
 \begin{align*}
 x_1f&=x_1\qty(\sum\limits_{\alpha_1,\alpha_2}f_{\alpha_1,\alpha_2}x_1^{\alpha_1}x_2^{\alpha_2})\\
 &=\sum\limits_{\alpha_1,\alpha_2}\tilde{\tau}_1(f_{\alpha_1,\alpha_2})x_1^{\alpha_1+1}x_2^{\alpha_2},
 \end{align*}
 and 
 \begin{align*}
 fx_1&=\qty(\sum\limits_{\alpha_1,\alpha_2}f_{\alpha_1,\alpha_2}x_1^{\alpha_1}x_2^{\alpha_2})x_1\\
 &=\sum\limits_{\alpha_1,\alpha_2}f_{\alpha_1,\alpha_2}x_1^{\alpha_1}\qty(x_2^{\alpha_2}x_1)\\
 &=\sum\limits_{\alpha_1,\alpha_2}f_{\alpha_1,\alpha_2}x_1^{\alpha_1}\qty(\prod_{k=0}^{\alpha_2-1}\tilde{\tau}_2^k(c_{12}))x_1x_2^{\alpha_2}\\
 &=\sum\limits_{\alpha_1,\alpha_2}f_{\alpha_1,\alpha_2}\qty(\prod_{k=0}^{\alpha_2-1}\tilde{\tau_1}^{\alpha_1}\tilde{\tau_2}^k(c_{12}))x_1^{\alpha_1+1}x_2^{\alpha_2}.
 \end{align*}
 Therefore $x_1f=fx_1$ if and only if 
 $$\tilde{\tau}_1\qty(f_{\alpha_1,\alpha_2})=f_{\alpha_1,\alpha_2}\qty(\prod_{k=0}^{\alpha_2-1}\tilde{\tau_1}^{\alpha_1}\tilde{\tau_2}^k(c_{12})).$$
 On the other hand,
 \begin{align*}
 fx_2&=\qty(\sum\limits_{\alpha_1,\alpha_2}f_{\alpha_1,\alpha_2}x_1^{\alpha_1}x_2^{\alpha_2})x_2\\
 &=\sum\limits_{\alpha_1,\alpha_2}f_{\alpha_1,\alpha_2}x_1^{\alpha_1}x_2^{\alpha_2+1},
 \end{align*}
 and 
 \begin{align*}
 x_2f&=x_2\qty(\sum\limits_{\alpha_1,\alpha_2}f_{\alpha_1,\alpha_2}x_1^{\alpha_1}x_2^{\alpha_2})\\
 &=\sum\limits_{\alpha_1,\alpha_2}\tilde{\tau}_2\qty(f_{\alpha_1,\alpha_2})x_2x_1^{\alpha_1}x_2^{\alpha_2}\\
 &=\sum\limits_{\alpha_1,\alpha_2}\tilde{\tau}_2\qty(f_{\alpha_1,\alpha_2})\qty(\prod_{k=1}^{\alpha_1-1}\tilde{\tau}_1^k(c_{12}))x_1^{\alpha_1}x_2^{\alpha_2+1}.
 \end{align*}
 Therefore $x_2f=fx_2$ if and only if 
 $$f_{\alpha_1,\alpha_2}=\tilde{\tau}_2\qty(f_{\alpha_1,\alpha_2})\qty(\prod_{k=1}^{\alpha_1-1}\tilde{\tau}_1^k(c_{12})).$$
 We conclude that $f\in Z(A)$ if and only if 
 $$
\tilde{\tau}_1(f_{\alpha_1,\alpha_2})= \tilde{\tau}_2\qty(f_{\alpha_1,\alpha_2})\qty(\prod_{k=1}^{\alpha_1-1}\tilde{\tau}_1^k(c_{12}))\qty(\prod_{k=0}^{\alpha_2-1}\tilde{\tau_1}^{\alpha_1}\tilde{\tau_2}^k(c_{12})).
 $$
 In the next example we give an explicit description of the centralizer of $\mathbb{R}^{\Omega}$ and the center for a particular quasi-commutative skew PBW-extension $\tilde{\tau}(\mathbb{R}^{\Omega})<x_1,x_2>$. Recall that for $m=2$ the algebra $\mathbb{R}^{\Omega}$ is isomorphic to $\mathbb{R}^2.$
\begin{exam}
 \end{exam}
 Consider the quasi-commutative skew PBW extension $A=\tilde{\tau}\qty(\mathbb{R}^{\Omega})<x_1,x_2>$  with the following conditions.
 \begin{itemize}
 \item The automorphisms $\tilde{\tau}_1,\tilde{\tau}_2:\mathbb{R}^2\to \mathbb{R}^2$ are defined as follows:\par 
 $\tilde{\tau}_1=id,\ \tilde{\tau}_2(e_1)=e_2$ and $\tilde{\tau}_2(e_2)=e_1,$ where
 $e_1,e_2$ are the standard basis vectors in $\mathbb{R}^2.$ 
 \item $x_2x_1=(1,2)x_1x_2\ \qty(\Leftrightarrow\ x_1x_2=\qty(1,\frac{1}{2})x_2x_1)$
 \end{itemize}
 From Theorem \ref{centthmPBWRST}, the centralizer of $\mathbb{R}^{\Omega}$ in the skew PBW extension $\tilde{\tau}(\mathbb{R}^{\Omega})<x_1,x_2>$ is given by 
 $$
C\qty(\mathbb{R}^{\Omega})=\qty{\sum_{\alpha}f_{\alpha}x^{\alpha}\ :\ f_{\alpha}=0\text{ on }Sep^{\alpha}(\Omega)}. 
 $$
 In this case 
 $$
Sep^{\alpha}(\Omega)=Sep^{\alpha_1,\alpha_2}(\Omega)=Sep^{\alpha_2}=\begin{cases} \Omega & \text{ if }\alpha_2 \text{ is  odd}\\
\emptyset & \text{ if } \alpha_2 \text{ is even}.  
\end{cases} 
 $$
 Therefore
 $$
C\qty(\mathbb{R}^{\Omega})=\qty{\sum_{j,k}f_{j,2k}x_1^jx_2^{2k}}. 
 $$
 Now let us consider the center. \par 
 Suppose an element $f=\sum\limits_{\alpha}f_{\alpha}x^{\alpha}\in Z(A).$ Then $f\in C\qty(\mathbb{R}^{\Omega}),$ and $x_if=fx_i$ for $i=1,2.$ Since $f\in C\qty(\mathbb{R}^{\Omega}),$ then $f=\sum\limits_{j,k}f_{j,2k}x_1^jx_2^{2k}.$\par  Now
 \begin{align*}
 x_1f&=x_1\qty(\sum\limits_{j,k}f_{j,2k}x_1^jx_2^{2k})\\
 &=\sum\limits_{j,k}\tilde{\tau}_1(f_{j,2k})x_1^{j+1}x_2^{2k}\\
 &=\sum\limits_{j,k}f_{j,2k}x_1^{j+1}x_2^{2k}
 \end{align*}
 and 
 \begin{align*}
 fx_1&=\qty(\sum\limits_{j,k}f_{j,2k}x_1^jx_2^{2k})x_1\\
 &=\sum\limits_{j,k}f_{j,2k}x_1^j\qty(x_2^{2k}x_1)\\
 &=\sum\limits_{j,k}f_{j,2k}x_1^j\qty(\prod_{l=0}^{2k-1}\tilde{\tau}_2^l(1,2))x_1x_2^{2k}\\
 &=\sum\limits_{j,k}f_{j,2k}\tilde{\tau}_1^j\qty(2^k,2^k)x_1^{j+1}x_2^{2k}\\
 &=\sum\limits_{j,k}f_{j,2k}\qty(2^k,2^k)x_1^{j+1}x_2^{2k}.
 \end{align*}
 Therefore $x_1f=fx_1$ if and only if 
 $$f_{j,2k}=f_{j,2k}\qty(2^k,2^k)$$
 from which we obtain that either $f_{j,2k}=0$ for all $j,k$ or $k=0.$
\par 
Also $fx_2=x_2f$ and since $k=0,$ we get $f=\sum\limits_{j}f_jx_1^j.$ Therefore
$$fx_2=\qty(\sum\limits_{j}f_jx_1^j)x_2=\sum\limits_{j}f_jx_1^jx_2,$$ 
on the other hand
\begin{align*}
x_2f&=x_2\qty(\sum\limits_{j}f_jx_1^j)\\
&=\sum\limits_{j}\tilde{\tau}_2(f_j)x_2x_1^j\\
&=\sum\limits_{j}\tilde{\tau}_2(f_j)\qty(\prod_{l=0}^{j-1}\tilde{\tau}_1^l(1,2))x_1^jx_2\\
&=\sum\limits_{j}\tilde{\tau}_2(f_j)(1,2^j)x_1^jx_2
\end{align*}
from which we obtain that $x_2f=fx_2$ if and only if 
$$\tilde{\tau}_2(f)(1,2^j)=f_j.$$
If we suppose $f_j=(a,b)$ then we obtain that $x_2f=fx_2$ if and only if 
$$(b,a)(1,2^i)=(a,b)$$
that is, $a=b$ and $2^j=1$ ($\Rightarrow j=0$ for all $j$). Therefore
$$Z\qty(A)=\qty{\sum_{j}f_jx_1^j\ :\ f_j=k(1,1)\text{ for some } k\in \mathbb{R}}.$$
In following example, we investigate what happens to the center $Z\qty(\tilde{\tau}(\mathbb{R}^{\Omega})<x_1,x_2>)$ if we make a choice of constants  $c_{12}=(c_1,c_2)$ for arbitrary $c_1,c_2\in \mathbb{R}.$
\begin{exam}
 \end{exam}
 Consider the quasi-commutative skew PBW extension $A=\tilde{\tau}\qty(\mathbb{R}^{\Omega})<x_1,x_2>$ with the following conditions.
 \begin{itemize}
 \item The automorphisms $\tilde{\tau}_1,\tilde{\tau}_2:\mathcal{A}\to \mathcal{A}$ are defined as follows:\par 
 $\tilde{\tau}_1=id,\ \tilde{\tau}_2(e_1)=e_2$ and $\tilde{\tau}_2(e_2)=e_1,$ where
 $e_1,e_2$ are the standard basis vectors in $\mathbb{R}^2.$ 
 \item $x_2x_1=(c_1,c_2)x_1x_2\ \qty(\Leftrightarrow\ x_1x_2=\qty(\frac{1}{c_1},\frac{1}{c_2})x_2x_1)$ where $c_1,c_2\in\mathbb{R}$ with $c_1\neq 0\neq c_2.$
 \end{itemize}
 From Theorem \ref{centthmPBWRST}, the centralizer of $\mathbb{R}^{\Omega}$ in the skew PBW extension $\tilde{\tau}\qty(\mathbb{R}^{\Omega})<x_1,x_2>$ is given by 
 $$
C\qty(\mathbb{R}^{\Omega})=\qty{\sum_{\alpha}f_{\alpha}x^{\alpha}\ :\ f_{\alpha}=0\text{ on }Sep^{\alpha}(\Omega)}. 
 $$
 In this case 
 $$
Sep^{\alpha}(\Omega)=Sep^{\alpha_1,\alpha_2}(\Omega)=Sep^{\alpha_2}(\Omega)=\begin{cases} \Omega & \text{ if }\alpha_2 \text{ is  odd}\\
\emptyset & \text{ if } \alpha_2 \text{ is even}.  
\end{cases} 
 $$
 Therefore
 $$
C\qty(\mathbb{R}^{\Omega})=\qty{\sum_{j,k}f_{j,2k}x_1^jx_2^{2k}}. 
 $$
 Now suppose an element $f=\sum\limits_{\alpha}f_{\alpha}x^{\alpha}\in Z(A).$ Then $f\in C\qty(\mathbb{R}^{\Omega}),$ and $x_if=fx_i$ for $i=1,2.$ Since $f\in C\qty(\mathbb{R}^{\Omega}),$ then $f=\sum\limits_{j,k}f_{j,2k}x_1^jx_2^{2k}.$\par  Now
 \begin{align*}
 x_1f&=x_1\qty(\sum\limits_{j,k}f_{j,2k}x_1^jx_2^{2k})\\
 &=\sum\limits_{j,k}\tilde{\tau}_1(f_{j,2k})x_1^{j+1}x_2^{2k}\\
 &=\sum\limits_{j,k}f_{j,2k}x_1^{j+1}x_2^{2k}
 \end{align*}
 and 
 \begin{align*}
 fx_1&=\qty(\sum\limits_{j,k}f_{j,2k}x_1^jx_2^{2k})x_1\\
 &=\sum\limits_{j,k}f_{j,2k}x_1^j\qty(x_2^{2k}x_1)\\
 &=\sum\limits_{j,k}f_{j,2k}x_1^j\qty(\prod_{l=0}^{2k-1}\tilde{\tau}_2^l(c_1,c_2))x_1x_2^{2k}\\
 &=\sum\limits_{j,k}f_{j,2k}\tilde{\tau}_1^j\qty((c_1c_2)^k,(c_1c_2)^k)x_1^{j+1}x_2^{2k}\\
 &=\sum\limits_{j,k}f_{j,2k}\qty((c_1c_2)^k,(c_1c_2)^k)x_1^{j+1}x_2^{2k}.
 \end{align*}
 Therefore $x_1f=fx_1$ if and only if 
 $$f_{j,2k}=f_{j,2k}\qty((c_1c_2)^k,(c_1c_2)^k)$$
 from which we obtain that either $f_{j,2k}=0$ for all $j,k$ or $(c_1c_2)^k=1$ for all $k.$ That is $c_2=\frac{1}{c_1}.$ Therefore we have the following;\par
  If $c_2=c_1^{-1}$,  then from  $f=\sum\limits_{j,k}f_{j,2k}x_1^jx_2^{2k},$ we have,
$$fx_2=\qty(\sum\limits_{j,k}f_{j,2k}x_1^jx_2^{2k})x_2=\sum\limits_{j,k}f_{j,2k}x_1^jx_2^{2k+1}.$$ 
On the other hand
\begin{align*}
x_2f&=x_2\qty(\sum\limits_{j,k}f_{j,2k}x_1^jx_2^{2k})\\
&=\sum\limits_{j,k}\tilde{\tau}_2(f_{j,2k})x_2x_1^jx_2^{2k}\\
&=\sum\limits_{j,k}\tilde{\tau}_2(f_{j,2k})\qty(\prod_{l=0}^{j-1}\tilde{\tau}_1^l(c_1,c_2))x_1^jx_2^{2k+1}\\
&=\sum\limits_{j,k}\tilde{\tau}_2(f_{j,2k})(c_1^j,c_1^{-j})x_1^jx_2^{2k+1},
\end{align*}
from which we obtain that $x_2f=fx_2$ if and only if  $\tilde{\tau}_2(f_{j,2k})(c_1^j,c_1^{-j})=f_{j,2k}.$
If we suppose $f_{j,2k}=(a_{j,2k},b_{j,2k})$ then we obtain that $x_2f=fx_2$ if and only if 
$$(b_{j,2k},a_{j,2k})(c_1^j,c_1^{-j})=(a_{j,2k},b_{j,2k}).$$
That is,  $b_{j,2k}=c_1^{-j}a_{j,2k},$ and hence ,
$$Z\qty(\tilde{\tau}\qty(\mathbb{R}^{\Omega})<x_1,x_2>)=\qty{\sum_{j,k}f_{j,2k}x_1^jx_2^{2k}\ :\ f_{j,2k}=a_{j,2k}(1,c_1^{-j})\text{ for some } a_{j,2k}\in \mathbb{R}}.$$
\subsection{PBW extensions for the algebra of piecewise constant functions}
In section \ref{FunAlgPBWRST}, the algebra $\mathcal{A}$ of piecewise constant functions $h:\mathbb{R}\to \mathbb{R}$ with $N$ fixed jumps at points $t_1, t_2,\cdots,t_N$ was introduced and we proved that this algebra is isomorphic to $\mathbb{R}^{\Omega},$ the algebra of all functions $f:\Omega\to \mathbb{R}$ indexed by $\Omega=\qty{0,1,\cdots,2N}.$ In section \ref{centPBWfunalg}, we gave a description of the centralizer of $\mathbb{R}^{\Omega}$ in the skew PBW extension $\tilde{\tau}\qty(\mathbb{R}^{\Omega})<x_1,x_2,\cdots,x_n>$. Therefore in this section, we  give the centralizer of the coefficient algebra $\mathcal{A}$ in the skew PBW extension $\tilde{\sigma}(\mathcal{A})<x_1,x_2,\cdots,x_n>$ in terms of the isomorphism $\mu:\mathbb{R}^{\Omega}\to \mathcal{A}$ as given in equation \eqref{muEqPBWRST} and $Sep^{\alpha}(\Omega),$ as given in definition \ref{sepdef}. We start with the following definition.
\begin{definition}\label{sepdefPCRST}
For $\alpha=(\alpha_1,\alpha_2,\cdots,\alpha_n)\in \mathbb{N}^n$, define
\begin{itemize}
\item[(a)] $Sep_{\mathcal{A}}^{\alpha}(\mathbb{R}):=\qty{x\in \mathbb{R}\ :\ (\exists\ h\in \mathcal{A})\ :\ \tilde{\sigma}^{\alpha}(h)(x)\neq h(x)};$
\item[(b)] $Per_{\mathcal{A}}^{\alpha}(\mathbb{R}):=\qty{x\in \mathbb{R}\ :\ \tilde{\sigma}^{\alpha}(h)(x)=h(x)}.$
\end{itemize}
\end{definition}
 Using methods similar to the proof of Theorem \ref{centthmPBWRST}, it can be shown that the centralizer of $\mathcal{A}$ in the quasi-commutative skew PBW extension $\tilde{\sigma}(\mathcal{A})<x_1,x_2,\cdots,x_n>$ is given by the following.
 \begin{proposition}
 Suppose that for $1\leqslant i\leqslant n,\ \delta_i=0.$ Then the centralizer $C(\mathcal{A}),$ of $\mathcal{A}$ in the skew $PBW$ extension $\tilde{\sigma}(\mathcal{A})<x_1,\cdots,x_n>$ is given by
$$C(\mathcal{A})=\qty{\sum_{\alpha}h_{\alpha}x^{\alpha}\ :\ h_{\alpha}=0\text{ on }Sep_{\mathcal{A}}^{\alpha}(\mathbb{R})}.$$
 \end{proposition}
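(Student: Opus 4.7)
The plan is to adapt the proof of Theorem \ref{centthmPBWRST} to the setting of $\mathcal{A}$ directly, since $\mathcal{A}$ is a commutative ring and Theorem \ref{thm1PBWRST} applies verbatim to the quasi-commutative skew PBW extension $\tilde{\sigma}(\mathcal{A})\langle x_1,\cdots,x_n\rangle$. First I would invoke Theorem \ref{thm1PBWRST} to conclude that an element $f=\sum_{\alpha}h_{\alpha}x^{\alpha}$ belongs to $C(\mathcal{A})$ if and only if
$$(\tilde{\sigma}^{\alpha}(r)-r)\,h_{\alpha}=0$$
for every $r\in\mathcal{A}$ and every $\alpha\in\mathbb{N}^n$. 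The remaining work is to translate this pointwise identity into the condition that $h_{\alpha}$ vanishes on $Sep_{\mathcal{A}}^{\alpha}(\mathbb{R})$ as defined in Definition \ref{sepdefPCRST}.

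For the forward implication, assume $f\in C(\mathcal{A})$. Given $x\in Sep_{\mathcal{A}}^{\alpha}(\mathbb{R})$, by definition there exists some $h\in\mathcal{A}$ such that $\tilde{\sigma}^{\alpha}(h)(x)\neq h(x)$. Specialising the identity above to $r=h$ and evaluating at $x$ yields $(\tilde{\sigma}^{\alpha}(h)(x)-h(x))\,h_{\alpha}(x)=0$ in $\mathbb{R}$; since $\mathbb{R}$ is a field and the first factor is nonzero, we conclude $h_{\alpha}(x)=0$. This holds at every point of $Sep_{\mathcal{A}}^{\alpha}(\mathbb{R})$, giving the desired vanishing.

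For the reverse implication, suppose $h_{\alpha}=0$ on $Sep_{\mathcal{A}}^{\alpha}(\mathbb{R})$ for each $\alpha$. Fix any $r\in\mathcal{A}$ and any $x\in\mathbb{R}$. Either $x\in Per_{\mathcal{A}}^{\alpha}(\mathbb{R})$, in which case $\tilde{\sigma}^{\alpha}(r)(x)-r(x)=0$ and the product trivially vanishes at $x$, or $x\in Sep_{\mathcal{A}}^{\alpha}(\mathbb{R})$, in which case the hypothesis forces $h_{\alpha}(x)=0$. Either way $((\tilde{\sigma}^{\alpha}(r)-r)h_{\alpha})(x)=0$, so the pointwise identity holds on all of $\mathbb{R}$; applying Theorem \ref{thm1PBWRST} again then places $f$ in $C(\mathcal{A})$.

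The routine calculations are essentially the same as in Theorem \ref{centthmPBWRST}, so no genuine obstacle is expected. The only subtle point is that $\mathcal{A}$ is not an integral domain, which is precisely why the centralizer is not just $\mathcal{A}$ itself but is described by pointwise vanishing on the separating sets; this is handled automatically by evaluating at each $x\in\mathbb{R}$ and using the field structure of $\mathbb{R}$. As a sanity check, one can transport the result through the isomorphism $\mu$ of Proposition \ref{IsomoPropRST} and combine with Lemma \ref{SeplemPBWRST} (extended to multi-indices, which is immediate from the definitions) to recover Theorem \ref{centthmPBWRST} on the $\mathbb{R}^{\Omega}$ side.
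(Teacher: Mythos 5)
Your proposal is correct and follows essentially the same route as the paper, which proves this proposition by the method of Theorem \ref{centthmPBWRST}: invoke Theorem \ref{thm1PBWRST} for the commutative ring $\mathcal{A}$ and then translate the condition $(\tilde{\sigma}^{\alpha}(r)-r)h_{\alpha}=0$ into pointwise vanishing on $Sep_{\mathcal{A}}^{\alpha}(\mathbb{R})$ via the $Sep$/$Per$ decomposition. Your write-up merely spells out the pointwise evaluation step in more detail than the paper does, which is harmless.
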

 In the next theorem, we give the description of the centralizer of the $\mathcal{A}$ in terms of the isomorphism $\mu$ and $Sep^{\alpha}(X),$ in the quasi-commutative case.
 \begin{theorem}\label{CentralPCPBWRST}
 The centralizer $C(\mathcal{A})$ of $\mathcal{A}$ in the quasi-commutative PBW extension $\tilde{\sigma}(\mathcal{A})<x_1,x_2,\cdots,x_n>$ is given by;
 $$
C(\mathcal{A})=\qty{\sum_{\alpha}h_{\alpha}x^{\alpha}\ :\ \mu^{-1}(h_{\alpha})=0 \text{ on } Sep^{\alpha}(\Omega)}. 
 $$
 where $\mu$ is given by \eqref{muEqPBWRST}.
 \end{theorem}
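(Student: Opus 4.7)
The plan is to reduce the claim to the already-established quasi-commutative characterization of the centralizer of $\mathcal{A}$ given in the preceding Proposition, and then to transport the vanishing condition through the algebra isomorphism $\mu$ using Lemma \ref{SeplemPBWRST}. By the Proposition, an element $\sum_{\alpha}h_{\alpha}x^{\alpha}$ lies in $C(\mathcal{A})$ if and only if $h_{\alpha}=0$ on $Sep_{\mathcal{A}}^{\alpha}(\mathbb{R})$ for every $\alpha\in\mathbb{N}^{n}$. So the whole task is to show the equivalence
\[
h_{\alpha}=0\text{ on }Sep_{\mathcal{A}}^{\alpha}(\mathbb{R})\ \Longleftrightarrow\ \mu^{-1}(h_{\alpha})=0\text{ on }Sep^{\alpha}(\Omega).
\]

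First I would promote Lemma \ref{SeplemPBWRST} from integer exponents $n\in\mathbb{Z}$ to multi-indices $\alpha\in\mathbb{N}^{n}$. The proof in the lemma only uses the intertwining relation $\tilde{\sigma}\circ\mu=\mu\circ\tilde{\tau}$, which iterates immediately to $\tilde{\sigma}^{\alpha}\circ\mu=\mu\circ\tilde{\tau}^{\alpha}$ (this is exactly the multi-index analogue of \eqref{intertwin2PBWRST} and is what yields $Sep^{\alpha}_{\mathbb{R}^{\Omega}}(\Omega)=Sep^{\alpha}(\Omega)$ in Definition \ref{sepdef}). Repeating the two-sided argument verbatim with $\alpha$ in place of $n$ gives: for $x\in I_{\omega}$, $x\in Sep_{\mathcal{A}}^{\alpha}(\mathbb{R})$ if and only if $\omega\in Sep^{\alpha}(\Omega)$.

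Next I would use the fact that each coefficient $h_{\alpha}\in\mathcal{A}$ is constant on every partition interval $I_{\omega}$, with that constant value equal to $\mu^{-1}(h_{\alpha})(\omega)$ by the definition \eqref{muEqPBWRST} of $\mu$. Consequently, for $x\in I_{\omega}$ one has $h_{\alpha}(x)=0$ if and only if $\mu^{-1}(h_{\alpha})(\omega)=0$. Combined with the multi-index extension of Lemma \ref{SeplemPBWRST}, this gives
\[
h_{\alpha}\equiv 0\text{ on }Sep_{\mathcal{A}}^{\alpha}(\mathbb{R})\ \Longleftrightarrow\ \mu^{-1}(h_{\alpha})\equiv 0\text{ on }Sep^{\alpha}(\Omega),
\]
and substituting this equivalence into the Proposition's description of $C(\mathcal{A})$ yields the stated formula.

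There is no real obstacle here; the content of the theorem is that the previous characterization, phrased intrinsically on $\mathbb{R}$, can be re-expressed on the indexing set $\Omega$ via $\mu$. The only care needed is in the bookkeeping of two facts: (i) the intertwining \eqref{intertwin2PBWRST} passes to multi-indices so that $Sep^{\alpha}_{\mathcal{A}}(\mathbb{R})$ is precisely the union of those $I_{\omega}$ with $\omega\in Sep^{\alpha}(\Omega)$, and (ii) $h_{\alpha}$ being piecewise constant means vanishing on $I_{\omega}$ is the same as vanishing of $\mu^{-1}(h_{\alpha})$ at the single point $\omega$. Both are straightforward from the material already developed in Section \ref{FunAlgPBWRST}.
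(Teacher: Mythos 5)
Your proposal is correct and follows essentially the same route as the paper: both reduce to the preceding Proposition's description of $C(\mathcal{A})$ via $Sep_{\mathcal{A}}^{\alpha}(\mathbb{R})$ and then identify $Sep_{\mathcal{A}}^{\alpha}(\mathbb{R})$ with the union of the intervals $I_{\omega}$, $\omega\in Sep^{\alpha}(\Omega)$, using the piecewise constant structure of the coefficients. The only cosmetic difference is that the paper verifies this identification directly through the map $\gamma(x)=\omega$ for $x\in I_{\omega}$ (showing $\gamma^{-1}\qty(Sep^{\alpha}(\Omega))=Sep_{\mathcal{A}}^{\alpha}(\mathbb{R})$), whereas you obtain it by extending Lemma \ref{SeplemPBWRST} to multi-indices via the intertwining relations, which amounts to the same computation.
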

 \begin{proof}
 Define a map $\gamma:\mathbb{R}\to \Omega$ such that
 $$\gamma(x)=\omega\ \text{ if }x\in I_{\omega},\ \omega\in \qty{0,1,\cdots,2N}.$$
 Then for every $f\in \mathbb{R}^{\Omega}$ and every $x\in I_{\omega}$,
 $$\mu(f)(x)=f(\omega)=(f\circ \gamma) (x),$$ 
 and for every $h\in \mathcal{A},$ $$\mu^{-1}(h)(\omega)=h(x)\ \text{ for all }x\in \gamma^{-1}(\omega),$$
 where $\gamma^{-1}(\omega)$ denotes the pre-image of $\omega.$ Observe that
 $$\qty{\sum_{\alpha}h_{\alpha}x^{\alpha}\ :\ \mu^{-1}(h_{\alpha})=0 \text{ on } Sep^{\alpha}(\Omega)}=\qty{\sum_{\alpha}h_{\alpha}x^{\alpha}\ :\ h_{\alpha}(x)=0\ \forall\ x\in \gamma^{-1}\qty(Sep^{\alpha}(\Omega))}.$$
 Therefore it remains to prove that $\gamma^{-1}\qty(Sep^{\alpha}(\Omega))=Sep_{\mathcal{A}}^{\alpha}(\mathbb{R}).$ To this end we have the following;
 \begin{align*}
 \gamma^{-1}\qty(Sep^{\alpha}(\Omega))&=\gamma^{-1}\qty(\qty{\omega\in \Omega\ :\ \tau^{\alpha}(\omega)\neq \omega})\\
 &=\qty{\gamma^{-1}(\omega)\ :\ \tau^{\alpha}(\omega)\neq \omega}\\
 &=\qty{x\in \mathbb{R}\ :\ (x\in I_{\omega})\ :\ \tau^{\alpha}(\omega)\neq \omega}\\
 &=\qty{x\in \mathbb{R}\ :\ x\in I_{\omega} \ \text{ and }\sigma^{\alpha}(I_{\omega})\cap I_{\omega}=\emptyset}\\
 &=\qty{x\in \mathbb{R}\ :\ (\exists\ h\in \mathcal{A})\ :\ \tilde{\sigma}^{\alpha}(h)(x)\neq h(x)}\\
 &=Sep_{\mathcal{A}}^{\alpha}(\mathbb{R}).
 \end{align*}
 This completes the proof.
 \end{proof}
 Using the same methods in the proof of Theorem \ref{centNCPBWRST} and from Theorem \ref{CentralPCPBWRST} above, we have the following necessary condition in the general case.
 \begin{theorem}
If an element $\sum_{\alpha}f_{\alpha}x^{\alpha}\in \tilde{\tau}(\mathbb{R}^{\Omega})<x_1,\cdots,x_n>$ belongs to the centralizer of $\mathbb{R}^{\Omega},$ then $\mu^{-1}(h_{\alpha})=0$ on $Sep^{\alpha}(\Omega).$
\end{theorem}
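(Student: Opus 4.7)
The plan is to combine the general necessary condition from Theorem~\ref{thm2PBWRST} with the identification $Sep_{\mathcal{A}}^{\alpha}(\mathbb{R}) = \gamma^{-1}\bigl(Sep^{\alpha}(\Omega)\bigr)$ already established inside the proof of Theorem~\ref{CentralPCPBWRST}. Reading the statement in the context of the surrounding subsection, the intended assertion is that any $f = \sum_{\alpha} h_{\alpha} x^{\alpha}$ in $\tilde{\sigma}(\mathcal{A})\langle x_1,\cdots,x_n\rangle$ lying in the centralizer $C(\mathcal{A})$ satisfies $\mu^{-1}(h_{\alpha}) = 0$ on $Sep^{\alpha}(\Omega)$; this is the analogue for $\mathcal{A}$ of Theorem~\ref{centNCPBWRST}, lifted through the isomorphism $\mu$.

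First I would invoke Theorem~\ref{thm2PBWRST} with coefficient ring $R = \mathcal{A}$ (which is commutative). The hypothesis that $f$ centralizes $\mathcal{A}$ immediately yields
\begin{equation*}
\bigl(\tilde{\sigma}^{\alpha}(h) - h\bigr) h_{\alpha} = 0 \qquad \text{for every } h \in \mathcal{A} \text{ and every } \alpha \in \mathbb{N}^{n}.
\end{equation*}
Evaluating this pointwise on $\mathbb{R}$ shows that $h_{\alpha}(x) = 0$ whenever there exists some $h \in \mathcal{A}$ with $\tilde{\sigma}^{\alpha}(h)(x) \neq h(x)$; equivalently $h_{\alpha} \equiv 0$ on $Sep_{\mathcal{A}}^{\alpha}(\mathbb{R})$, by Definition~\ref{sepdefPCRST}.

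Next I would transfer this vanishing statement through $\mu$. The proof of Theorem~\ref{CentralPCPBWRST} already computes
\begin{equation*}
\gamma^{-1}\bigl(Sep^{\alpha}(\Omega)\bigr) = Sep_{\mathcal{A}}^{\alpha}(\mathbb{R}),
\end{equation*}
where $\gamma:\mathbb{R}\to\Omega$ sends $x\in I_{\omega}$ to $\omega$. Combining the two displays, $h_{\alpha}$ vanishes on $\gamma^{-1}\bigl(Sep^{\alpha}(\Omega)\bigr)$. Finally, since $\mu^{-1}(h_{\alpha})(\omega) = h_{\alpha}(x)$ for any $x \in I_{\omega}$, the vanishing of $h_{\alpha}$ on $\gamma^{-1}\bigl(Sep^{\alpha}(\Omega)\bigr)$ is by definition the vanishing of $\mu^{-1}(h_{\alpha})$ on $Sep^{\alpha}(\Omega)$.

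No new ideas beyond Theorems~\ref{thm2PBWRST} and~\ref{CentralPCPBWRST} are required; the statement is essentially a rephrasing of the general necessary condition under the isomorphism $\mu$. The only mild obstacle is the bookkeeping of the three sets $Sep^{\alpha}(\Omega)$, $Sep_{\mathbb{R}^{\Omega}}^{\alpha}(\Omega)$, and $Sep_{\mathcal{A}}^{\alpha}(\mathbb{R})$, but Lemma~\ref{SeplemPBWRST} and the computation inside Theorem~\ref{CentralPCPBWRST} already reconcile these, so the proof reduces to a short chain of citations.
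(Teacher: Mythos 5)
Your proposal is correct and follows exactly the route the paper intends: the paper gives no written proof of this statement, only the remark that it follows ``using the same methods as in Theorem~\ref{centNCPBWRST} and from Theorem~\ref{CentralPCPBWRST},'' and your argument (apply the necessary condition of Theorem~\ref{thm2PBWRST} with $R=\mathcal{A}$ to get $h_{\alpha}=0$ on $Sep_{\mathcal{A}}^{\alpha}(\mathbb{R})$, then use $\gamma^{-1}\bigl(Sep^{\alpha}(\Omega)\bigr)=Sep_{\mathcal{A}}^{\alpha}(\mathbb{R})$ and the definition of $\mu$ to translate this into $\mu^{-1}(h_{\alpha})=0$ on $Sep^{\alpha}(\Omega)$) is precisely that sketch made explicit. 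Your reading of the garbled statement (coefficients $h_{\alpha}$ in $\tilde{\sigma}(\mathcal{A})\langle x_1,\cdots,x_n\rangle$ rather than $f_{\alpha}$ in $\tilde{\tau}(\mathbb{R}^{\Omega})\langle x_1,\cdots,x_n\rangle$) is the sensible one and consistent with the surrounding subsection.
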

\begin{exam}
 \end{exam}
 Consider the quasi-commutative skew PBW extension $A=\tilde{\tau}\qty(\mathbb{R}^{\Omega})<x_1,x_2>$ with the following conditions.
 \begin{itemize}
 \item The automorphisms $\tilde{\tau}_1,\tilde{\tau}_2:\mathcal{A}\to \mathcal{A}$ are defined as follows:\par 
 $\tilde{\tau}_1=id,\ \tilde{\tau}_2(e_1)=e_2$ and $\tilde{\tau}_2(e_2)=e_1,$ where
 $e_1,e_2$ are the standard basis vectors in $\mathbb{R}^2.$ 
 \item $x_2x_1=(c_1,c_2)x_1x_2\ \qty(\Leftrightarrow\ x_1x_2=\qty(\frac{1}{c_1},\frac{1}{c_2})x_2x_1)$ where $c_1,c_2\in\mathbb{R}$ with $c_1\neq 0\neq c_2.$
 \end{itemize}
This corresponds to the algebra $\mathcal{A}$ of piecewise constant functions with one fixed jump point $t_1$ with $\mathbb{R}$ partitioned into intervals  $I_0=(-\infty, t_1),\ I_1=(t_1,\infty)$ and $I_3=t_1.$ Invariance of $\mathcal{A}$ under any bijection $\sigma:\mathbb{R}\to \mathbb{R}$ implies that $\sigma(t_1)=t_1.$\par 
From the definition of the automorphisms $\tilde{\tau}_1,\tilde{\tau}_2$ we see that the corresponding bijections $\sigma_1,\sigma_2:\mathbb{R}\to \mathbb{R}$ be have as follows
\begin{itemize}
\item $\sigma_1(t_1)=\sigma_2(t_1)=t_1.$
\item $\sigma_1(I_0)=I_0$ and hence $\sigma_1(I_1)=I_1.$
\item $\sigma_2(I_0)=I_1$ which implies $\sigma_2(I_1)=I_0.$
\end{itemize}
From Theorem \ref{CentralPCPBWRST}, it follows that for every $\alpha=(\alpha_1,\alpha_2)\in \mathbb{N}^2,$
$$
\gamma^{-1}\qty(Sep^{\alpha}(\Omega))=Sep_{\mathcal{A}}^{\alpha}(\mathbb{R})=\begin{cases}I_0\cup I_1 &\ \text{ if }\alpha_2 \text{ is odd}\\ \emptyset, & \text{ if $\alpha_2$ is even}\end{cases}.
$$
Therefore the centralizer of $\mathcal{A}$ in the skew PBW extension $\tilde{\sigma}(\mathcal{A})<x_1,x_2>$ is given by
$$
C(\mathcal{A})=\qty{\sum_{j,k}h_{j,k}x_1^{j}x_2^k\ :\ h_{j,2k+1}=0 \text{ on } I_0\cup I_1}. 
 $$

\subsubsection*{Acknowledgement}
This research was supported by the Swedish International Development Cooperation Agency (Sida), International Science Programme (ISP) in Mathematical Sciences (IPMS), Eastern Africa Universities Mathematics Programme (EAUMP).  Alex Behakanira Tumwesigye is also grateful to the research environment Mathematics and Applied Mathematics (MAM), Division of Applied Mathematics, M\"alardalen University for providing an excellent and inspiring environment for research education and research.


\begin{thebibliography}{plain}
\bibitem{Artamonov1PBWRST}
Artamonov, V., (2008). \emph{Quantum polynomials}, WSPC proceedings.
\bibitem{Artamonov2PBWRST}
Artamonov, V., (1998). \emph{Serre's quantum problem}, Russian Math. Surveys, 53(4).
\bibitem{BellPBWRST}
Bell, A., Goodearl, K., (1988). \emph{Uniform rank over differential operator rings and Poincare-Birkoff-Witt extensions}, Pacific Journal of Mathematics, 131(1), 13--37.
\bibitem{GallegoPBWRST}
Gallego, C., Lezama, O., (2010). \emph{Gr\"{o}bner bases for ideals of $\sigma-$PBW extensions}, Communications in Algebra, 39: {1}, (50--75)
\bibitem{Lezama1PBWRST}
Lezama, O., Acosta, J. P. and Reyes, A., (2015). \emph{Prime ideals of skew PBW extensions}, Rev. Uni. mat. Argentina, 56, No.2 (39--55)
\bibitem{Lezama3PBWRST}
Lezama, O., Ramirez, H., (2018). \emph{Center of skew PBW extensions}, 	arXiv:1804.05425v4 [math.RA].
\bibitem{Lezama2PBWRST}
Lezama, O. (2017). \emph{Some homological properties of skew PBW extensions arising in non-commutative algebraic geometry}, Discussiones Mathematicae - General Algebra and Applications 37, 1, 45--57
\bibitem{Richter1PBWRST}
Richter, J., Silvestrov, S. D., Tumwesigye, A. B., (2017). \emph{Ore extensions of function algebras}, to appear in Proceedings of the International Conference "Stochastic Processes and Algebraic Structures -- From Theory Towards Applications" (SPAS 2017), V\"{a}ster{\aa}s --Stockholm.
\bibitem{AlexABTPBWRST}
Richter, J., Silvestrov S.~D. and Ssembatya, V.~A. Tumwesigye, A.~B, (2016). \emph{Crossed product algebras for piece-wise constant functions}, Silvestrov S., Ran\v{c}i\'{c} M.,  (eds.), Engineering Mathematics II, Springer Proceedings in Mathematics and Statistics, Springer, pp. 18.
\bibitem{Richter2PBWRST}
\"{O}inert, J., Richter, J., and Silvestrov, S.~ D. (2013). \emph{Maximal commutative subrings and simplicity in ore extensions},  Journal of Algebra and Its Applications, 12, 1250192.
\end{thebibliography}
\end{document}